\newcommand{\R}{\mathbb{R}}
\newcommand{\N}{\mathbb{N}}
\newcommand{\Rnp}{\mathbb{R}^{n\times p}} 
\newcommand{\Rpp}{\mathbb{R}^{p\times p}} 
\newcommand{\Rnn}{\mathbb{R}^{n\times n}}  
\newcommand{\Rnm}{\mathbb{R}^{n\times m}}
\newcommand{\cL}{\mathcal{L}}
\newcommand{\orth}{\mathbf{orth}}
\newcommand{\rank}{\mathrm{rank}}
\newcommand{\stiefel}{{\cal S}_{n,p}}
\newcommand{\tr}{\mathrm{tr}}
\newcommand{\zz}{^{\top}}
\newcommand{\st}{\mathrm{s.\,t.}\,\,} 
\newcommand{\ff}{_{\mathrm{F}}}
\newcommand{\fs}{^2_{\mathrm{F}}}
\newcommand{\iid}{i=1,\dotsc,d}
\newcommand{\iin}{i=1,\dotsc,n}
\newcommand{\sumiid}{\sum\limits_{i=1}^d}
\newcommand{\sumjjd}{\sum\limits_{j=1}^d}
\newcommand{\dkh}[1]{\left(#1\right)}
\newcommand{\hkh}[1]{\left\{#1\right\}}
\newcommand{\jkh}[1]{\left\langle#1\right\rangle}
\newcommand{\norm}[1]{\left\|#1\right\|}
\newcommand{\abs}[1]{\left|#1\right|}
\newcommand{\dsp}[2]{{\mathbf{D_p}}\left(#1,#2\right)}
\newcommand{\dist}[2]{{\mathbf{d_p}}\left(#1,#2\right)}
\newcommand{\Pj}[1]{{\mathbf{P}}\!_{#1}}
\newcommand{\Pv}[1]{{\mathbf{P}}^{\perp}_{#1}}
\newcommand{\dists}[2]{{\mathbf{d}^2_{\mathbf{p}}}\left(#1,#2\right)}
\newcommand{\dik}{\mathbf{d}_i^{(k)}}
\newcommand{\ditk}{\left(\mathbf{d}_i^{(k)}\right)^2}
\newcommand{\di}{\mathbf{d}_i}
\newcommand{\dit}{\mathbf{d}_i^2}
\newcommand{\diz}{\mathbf{d}_i^{(0)}}
\newcommand{\ditz}{\left(\mathbf{d}_i^{(0)}\right)^2}
\newcommand{\ditkp}{\left(\mathbf{d}_i^{(k+1)}\right)^2}
\newcommand{\dips}{\left(\mathbf{d}_i^+\right)^2}
\newcommand{\djps}{\left(\mathbf{d}_j^+\right)^2}
\DeclareMathOperator*{\argmin}{arg\,min}
\newtheorem{assumption}[theorem]{Assumption}
\title{Seeking Consensus on Subspaces in Federated Principal Component Analysis}
\author{\name Lei Wang \email wlkings@lsec.cc.ac.cn \\
	\addr State Key Laboratory of Scientific and Engineering Computing \\
	Academy of Mathematics and Systems Science \\
	Chinese  Academy of Sciences \\
	and University of Chinese Academy of Sciences \\
	Beijing, China
	\AND 
	\name Xin Liu\thanks{Corresponding author.} \email liuxin@lsec.cc.ac.cn \\
	\addr State Key Laboratory of Scientific and Engineering Computing \\
	Academy of Mathematics and Systems Science \\
	Chinese  Academy of Sciences \\
	and University of Chinese Academy of Sciences \\
	Beijing, China
	\AND 
	\name Yin Zhang \email yinzhang@cuhk.edu.cn \\
	\addr School of Data Science \\
	The Chinese University of Hong Kong \\
	Shenzhen, China
}
\begin{document}

\maketitle

\begin{abstract}
	In this paper, we develop an algorithm for federated principal component analysis (PCA) 
	with emphases on both communication efficiency and data privacy.  Generally speaking, 
	federated PCA algorithms based on direct adaptations of classic iterative methods, 
	such as simultaneous subspace iterations (SSI), are unable to preserve data privacy,
	while algorithms based on variable-splitting and consensus-seeking, 
	such as alternating direction methods of multipliers (ADMM), lack in communication-efficiency.
	In this work, we propose a novel consensus-seeking formulation
	by equalizing subspaces spanned by splitting variables instead of equalizing variables themselves,
	thus greatly relaxing feasibility restrictions and allowing much faster convergence.
	Then we develop an ADMM-like algorithm with several special features to make it practically efficient, 
	including a low-rank multiplier formula and techniques for treating subproblems. 
	We establish that the proposed algorithm can better protect data privacy than classic methods 
	adapted to the federated PCA setting. 
	We derive convergence results, including a worst-case complexity estimate, 
	for the proposed ADMM-like algorithm in the presence of the nonlinear equality constraints.
	Extensive empirical results are presented to show that the new algorithm, 
	while enhancing data privacy, requires far fewer rounds of communication than 
	existing peer algorithms for federated PCA.
\end{abstract}

\begin{keywords}
	alternating direction method of multipliers, 
	federated learning,
	principal component analysis, 
	optimization with orthogonality constraints,
	Stiefel manifold
\end{keywords}

\section{Introduction}

	Principal component analysis (PCA) 
	is a fundamental and ubiquitous technique 
	for data analysis and dimensionality reduction \citep{Moore1981}
	with a wide and still rapidly growing variety of applications, such as
	image compression~\citep{Andrews1976}, 
	dictionary learning~\citep{Aharon2006},
	facial recognition~\citep{Turk1991}, 
	latent semantic analysis~\citep{Deerwester1990}, 
	matrix completion~\citep{Candes2009}, and so on.
	
	Let $A \in \Rnm$ be an $n\times m$ data matrix, properly pre-processed
	with $n$ features and $m$ samples where, without loss of generality,
	$n \leq m$ is always assumed (usually $n \ll m$).  To reduce data dimensionality,
	PCA is to find an orthonormal basis of a $p$-dimensional subspace in $\R^n$
	such that the projected samples on this subspace have the largest variance.
	In truly large-scale applications, computing PCA is practically affordable only for $p \ll n$. 
	Mathematically, PCA can be formulated as the following optimization problem,
	\begin{equation}
		\label{eq:opt-trace}
		\min\limits_{Z \in \stiefel} \quad f (Z) := -\dfrac{1}{2} \tr \dkh{ Z\zz AA\zz Z },
	\end{equation}  
	where $\stiefel := \{Z \in \Rnp \mid Z\zz Z = I_p\}$ 
	denotes the Stiefel manifold \citep{Wang2021multipliers}.

\subsection{Federated Setting}
	\label{subsec:distributed}
	
	To develop scalable capacities for PCA calculations in today's big-data environments, 
	it is critical to study algorithms that can efficiently and securely process 
	distributed and massively large-scale data sets.
	In this paper, we consider the following federated setting \citep{Mcmahan2017communication}: 
	the data matrix $A$ is divided into $d$ blocks, each containing a group of samples; 
	namely, $A = [A_1\; A_2\; \dotsb \; A_d]$,
	where $A_i \in \R^{n\times m_i}$ so that $m_1 + \dotsb + m_d = m$.  
	These submatrices $A_i$, $\iid$, are stored locally in $d$ locations, 
	possibly having been collected and owned by different clients,
	and all the clients are connected, directly or indirectly, to a designated center 
	which could either be a special-purpose server or just one of the clients.
	In this federated setting, to solve \eqref{eq:opt-trace} it appears that
	products of the form
	\begin{equation*}
		AA\zz Z = A_1A_1\zz Z + A_2A_2\zz Z + \cdots + A_dA_d\zz Z
	\end{equation*}																			
	need to be aggregated at the center after all the local products, $A_iA_i\zz Z$, are 
	computed by the individual clients.  Indeed, this is the case when one adapts a classic 
	method to the federated setting.													

	
	In evaluating federated algorithms, a key measure of performance is 
	the total amount of communications required by algorithms.  
	In general, during iterations heavy computations are mostly done 
	at the local level within each client, and communications occur between iterations 
	for the center to aggregate newly updated local information from all the clients.
	In this work, we consider that the amount of communication at each iteration 
	remains essentially the same throughout the calculation.  
	In this setting, the total amount of communication 
	overhead will be proportional to the total number of iterations taken by an algorithm.
	That is, we consider the most prominent measure of communication efficiency to be 
	the number of iterations required by an algorithm to reach a moderately high accuracy.

	Besides enhancing communication-efficiency, preserving the privacy of local data is also 
	a critical task in the federated setting, since in many real-world applications local data consist of 
	sensitive information such as personal medical or financial records \citep{Lou2017,Zhang2018a}.
	In this paper, we consider the following privacy scenario that will be called
	{\em intrinsic privacy} for the sake of convenience.
	
	\begin{definition}[Intrinsic Privacy] \label{def:privacy}
		Each client does not allow its privately owned data matrix $A_i A_i\zz$, 
		$i \in \{1, 2, \dotsc, d\}$, 
		to be revealed to any others including the center.  In particular, the center 
		should be prevented from obtaining $A_i A_i\zz$ based on quantities 
		shared by client $i$.
	\end{definition}

	In this intrinsic privacy situation, it is not an option to implement a pre-agreed encryption 
	or a coordinated masking operation. For an algorithm to preserve intrinsic privacy, 
	publicly exchanged quantities must be safe in the sense that the center, or anyone else, 
	will be unable to compute local-data matrix $A_iA_i\zz$ from such quantities.  We will soon show
	next that direct adaptations of classic methods such as SSI are not intrinsically private.

\subsection{Overview of Related Works}

\label{subsec:overview}

	The subject of computing PCA has been thoroughly studied over several decades 
	and various iterative algorithms have been developed.
	We briefly review a small subset of algorithms closely related to the present work.  
	 
	Classical PCA algorithms are mostly based on the simultaneous subspace iteration (SSI)
	\citep{Rutishauser1970,Stewart1976,Stewart1981},
	whose procedure can be readily extended to the federated setting as follows.
	\begin{equation} \label{eq:SSI}
		Z^{(k + 1)} \in \orth \dkh{\sumiid Y_i^{(k)}}
		\mbox{~with~}
		Y_i^{(k)} =  A_i A_i\zz  Z^{(k)},
	\end{equation}
	where $\orth (M)$ refers to the set of orthonormal bases for the range space of $M$.
	Under the federated setting, 
	each client computes $Y_i^{(k)}$ using the local data for $\iid$ 
	and sends  the result to the center server,
	which aggregates all the local products to generate the next iterate
	and then sends it to all the clients.
	For convenience, we say that SSI requires one round of communications per iteration
	for such information exchange between the center server and clients. 
	The main drawback of SSI lies in its slow convergence under unfavorable conditions,
	leading to intolerably high communication costs in the federated environments.
	To improve the communication efficiency, 
	\citet{Li2021communication} proposes an accelerated version of federated SSI
	by alternating between multiple local subspace iterations and one global aggregation,
	which is called LocalPower.

	We observe that the above direct adaption of SSI to the federated setting results
	in a vulnerability of losing intrinsic privacy, as defined in Definition~\ref{def:privacy}.
	This is because that the shared quantities by each client have a linear relationship 
	with its local data.   The crux is that, for given pairs of $Z^{(k)}$ and $Y_i^{(k)}$,
	the second equation in \eqref{eq:SSI} provides a set of linear equations 
	for the ``unknown" $A_i A_i\zz$.
	Should the center server aims to recover the private data $A_i A_i\zz$, 
	it would only need to collect a sufficient number of publicly shared matrices 
	$\{Z^{(k)}\}$ and $\{ Y_i^{(k)} \}$, and then to solve the resulting linear system of 
	equations for $A_i A_i\zz$ as given in \eqref{eq:SSI}.
	Under a mild condition, $A_iA_i\zz$ would be uniquely and exactly determined 
	by solving a linear system.
	To be precise, we formalize the above argument into the following proposition
	whose proof is evident.
	\begin{proposition} \label{priv-SSI}
		Let $\mathcal{Z}_k = \left[ Z^{(1)}\; Z^{(2)}\; \dotsb \; Z^{(k)} \right] 
		\in \R^{n \times kp}$.
		Suppose that $k$ is sufficiently large so that $\mathrm{rank}(\mathcal{Z}^k) = n$.  
		Then for $i = 1, 2, \dotsc, d$, there holds
		\begin{equation*}
			A_iA_i\zz = 
			\left[ Y_i^{(1)}\; Y_i^{(2)}\; \dotsb \; Y_i^{(k)} \right]\mathcal{Z}_k\zz
			\left(\mathcal{Z}_k\mathcal{Z}_k\zz\right)^{-1}.
		\end{equation*}
	\end{proposition}
	To put it simply, the ``federated SSI" algorithm cannot preserve \emph{intrinsic privacy}.
	That is, under mild conditions, the center can recover local data matrices exactly by solving 
	linear systems of equations bases on shared quantities. 

	In practice, an approximation of $A_iA_i\zz$ can be discovered after very few 
	iterations in practice, which is illustrated by the following numerical instances.
	We randomly generate the test matrix $A \in \mathbb{R}^{n \times m}$ 
	with $n = 1000$ and $m = 10000$,
	and the number of computed principal components is set to $p = 100$.
	In Figure \ref{subfig:Iter_SSI}, we record how the KKT violation 
	$\|(I_n - Z^{(k)} (Z^{(k)})\zz) AA\zz Z^{(k)}\|_{\mathrm{F}}$
	and the reconstruction error $\|\Phi^{(k)} - A_1 A_1^{\top}\|_{\mathrm{F}}$ 
	reduces as the number of iterations increases.
	Here, $\Phi^{(k)}$ represents the solution to the following optimization problem,
	\begin{equation*}
		\min_{\Phi \in \Rnn} \norm{\Phi}\fs
		\quad \st \quad  \Phi \mathcal{Z}_k = 
		\left[ Y_1^{(1)}\; Y_1^{(2)}\; \dotsb \; Y_1^{(k)} \right], 
	\end{equation*}
	where $\mathcal{Z}_k$ is defined as in Proposition \ref{priv-SSI}.
	We can observe that the local data can be restored to certain accuracy, say $10^{-5}$, 
	much faster than solving the PCA problem.
	Next, we fix $m = 10000$ and $p = 100$ with $n$ ranging from $1000$ to $5000$.
	We record the number of iterations required by SSI to reach $10^{-5}$ 
	in KKT violation accuracy or reconstruction error in Figure \ref{subfig:n_SSI}.
	Again, the number of iterations required to recover the data 
	is much less than to solve the PCA problem.
	
	\begin{figure}[ht!]
		\centering
		\subfigure[Reconstruction error of SSI]{
			\label{subfig:Iter_SSI}
			\includegraphics[width=0.4\textwidth]{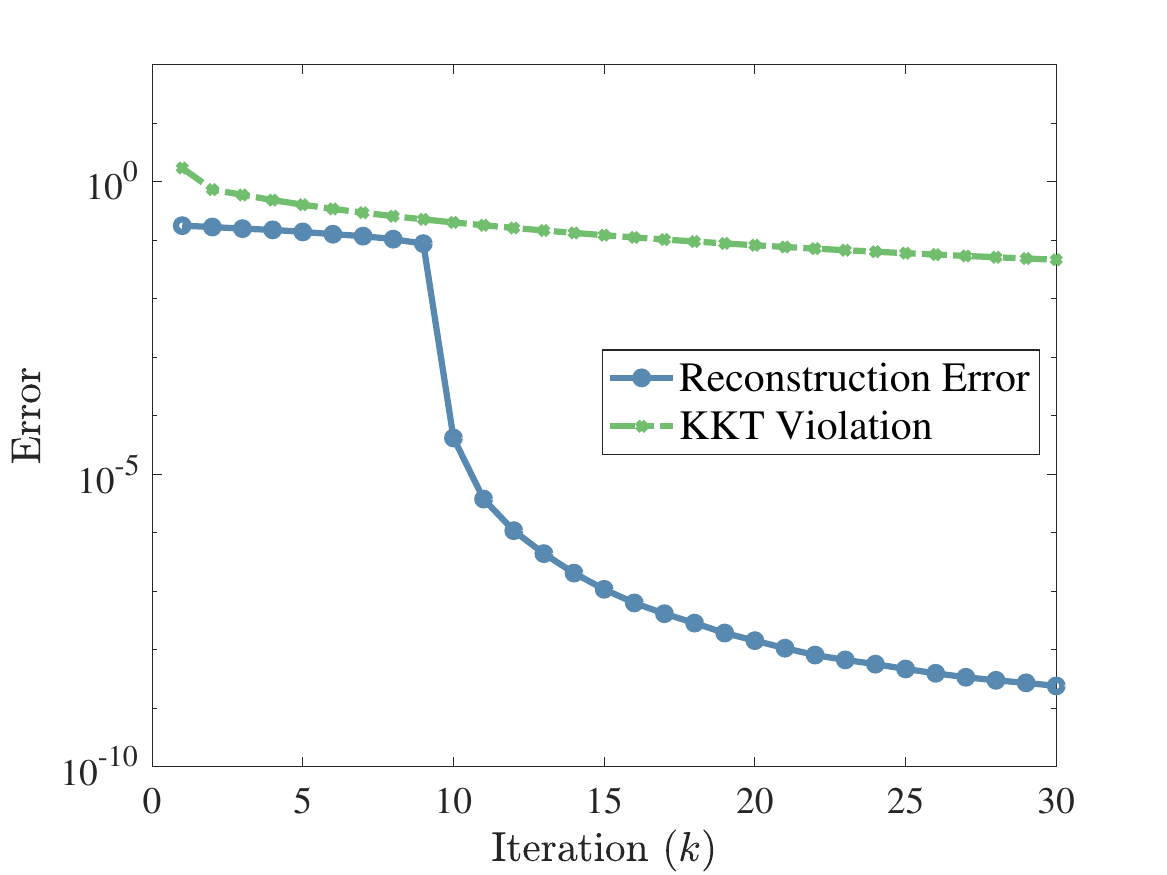}
		}
		\subfigure[Reconstruction iteration of SSI]{
			\label{subfig:n_SSI}
			\includegraphics[width=0.4\textwidth]{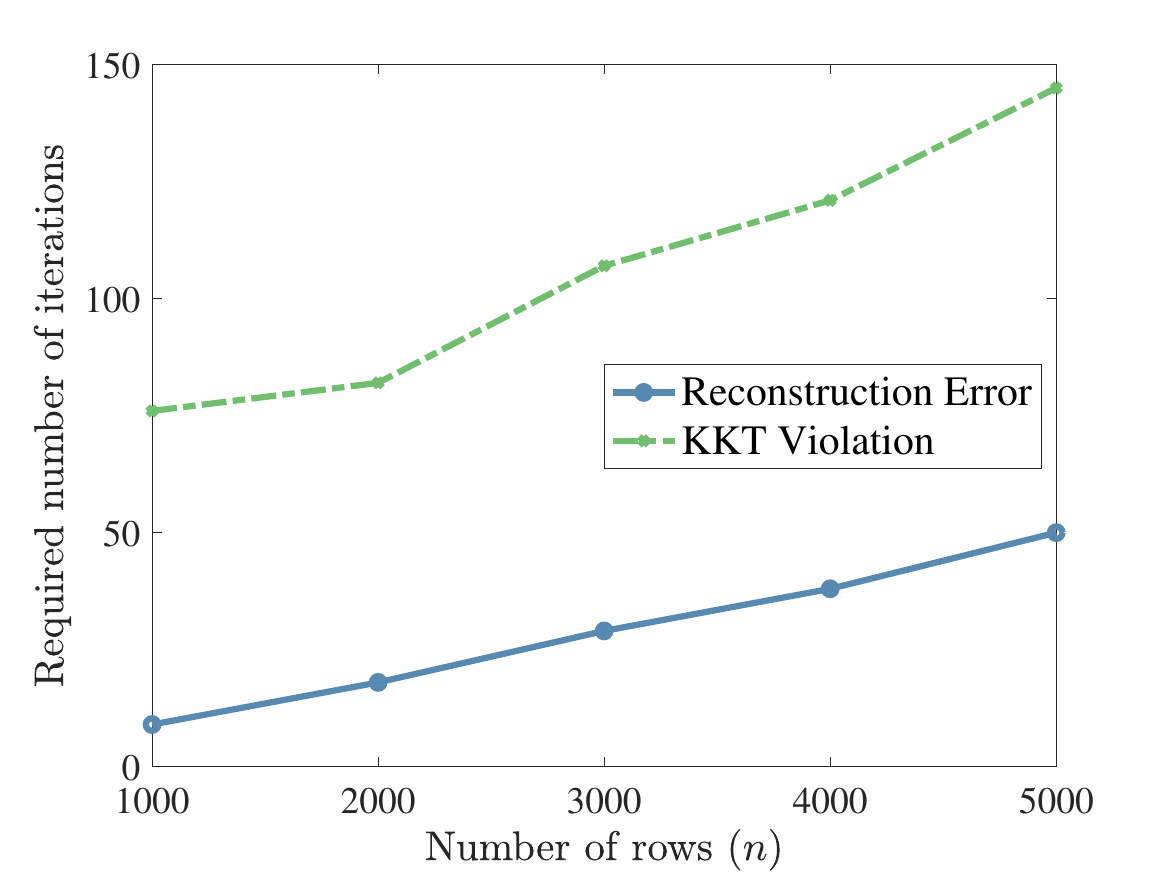}
		}
		
		\caption{Numerical examples of data leakage in the case of SSI.}
		\label{fig:security}
	\end{figure}

	To preserve the data privacy, \citet{Grammenos2020federated} 
	incorporate the differential privacy \citep{Dwork2014algorithmic} technique into the 
	federated PCA algorithm, which would cause a loss in accuracy loss 
	due to added noises.  \citet{Liu2019privacy} apply 
	the homomorphic encryption \citep{Acar2018survey} framework 
	to guarantee the data security in the context of federated PCA. 
	The homomorphic encryption framework involves large amounts of 
	computation and communication overheads due to the use of data inflation,
	resulting in significant performance degradation in practice.
	\citet{Chai2021federated} introduce a matrix masking scheme 
	designed for federated PCA to protect private data.
	However, it requires a trusted authority outside the federated system
	to generate secret masks and deliver them to the clients, which may 
	not exist or be desirable in general.  Moreover, this approach requires
	a high communication overhead since all the masked local data matrices
	have to be aggregated at the center server.
	
	Recently, a considerable amount of effort has been devoted to 
	developing decentralized algorithms for PCA.
	The decentralized extension of our algorithm is beyond the scope of this paper.
	We refer interested readers to 
	\citep{Schizas2015,Gang2019,Gang2021fast,Gang2022linearly,Ye2021,Wang2022decentralized}
	for more details.

\subsection{Our Contributions}
\label{subsec:contribution}

	We devise a communication-efficient approach, called FAPS, 
	to the federated PCA problem based on an ADMM-like framework.
	One iteration of FAPS is identical to a single iteration of the SSI algorithm
	applied to a matrix sum.  However, it differs from the federated SSI in that 
	the sum is not over the set of local data matrices $\{A_iA_i\zz\}$ as in \eqref{eq:SSI}, 
	but over a set of mask matrices $\{Q_i^{(k)}\}$.  That is,
	\begin{equation*} 
		Z^{(k + 1)} \in \orth \dkh{\sumiid Y_i^{(k)}}
		\mbox{~with~}
		Y_i^{(k)} =  Q_i^{(k)} Z^{(k)},
	\end{equation*}
	where $Q_i^{(k)} \in \Rnn$ masks $A_i A_i\zz$ for $\iid$ at iteration $k$.
	Each mask matrix is computed locally by an individual client based on 
	its local data and other up-to-date information.  A main innovation of this 
	work is to construct these mask matrices from a novel projection-splitting 
	model along with an ADMM-like algorithm, which will be developed in Sections 2 and 3.
	
	The above mask operation brings two major advantages:
	(1) it empirically and significantly accelerates convergence rate in terms of iteration count,
	as is illustrated by the small numerical example in Table \ref{tb:toy} 
	(see Section~\ref{sec:numerical-result} for comprehensive numerical results); and 
	(2) it preserves intrinsic privacy (see Definition~\ref{def:privacy}) of local data, 
	in contrast to the intrinsic privacy vulnerability of the federated SSI approach 
	(see Propositions~\ref{priv-SSI} and \ref{priv-FAPS}).

	\begin{table}[ht!]
		\centering
		\begin{tabular}{c|c|c} 
			\toprule 
			Algorithm & Iteration & Relative error\\
			\midrule
			SSI  &  207    &  1.09e-07 \\
			\midrule
			FAPS  &  42  &  8.04e-08 \\
			\bottomrule 
		\end{tabular} 
		\caption{Comparison of SSI and FAPS on a small example,
			where the matrix $A$, generated by \eqref{eq:gen-A} with $n = 2000$, $m = 128000$, 
			and $\xi = 1.01$, is tested with $p = 20$ and $d =128$.}
		\label{tb:toy} 
	\end{table}

	With the projection-splitting model, our ADMM-like framework is general and extendable.
	Beside SSI, many other existing methods for eigenspace calculation \citep[for example,][]{Liu2015b}
	can also be adapted to the federated PCA setting, but unfortunately with the same vulnerability in 
	terms of intrinsic privacy.   For each ``federated version" of such methods, our approach 
	can provide a corresponding ``masked version" to eliminate the intrinsic privacy vulnerability. 
	
	Furthermore, we established a theoretical convergence result 
	(see Theorem~\ref{thm:global}) for our FAPS algorithm which tackles
	not only non-convex manifold constraints
	but also nonlinear equality constraints that couple local variables to the global one.
	It is noteworthy that so far existing convergence theory for 
	ADMM algorithms \citep[for example,][]{Wang2019global,Zhang2020primal}
	on non-convex optimization over Riemannian manifolds are applicable
	only to coupled linear equality constraints.

\subsection{Notations}
\label{subsec:notation}

	We use $\R$ and $\N$ to denote the sets of real and natural numbers, respectively. The $p \times p$ identity matrix is represented by $I_p$.
	The Euclidean inner product of two matrices $Y_1$ and $Y_2$ of the same size is defined as $\jkh{Y_1, Y_2}=\tr(Y_1\zz Y_2)$, 
	where $\tr(B)$ is the trace of a square matrix $B$.
	The Frobenius norm and 2-norm of a matrix $X$ 
	are denoted by $\norm{X}\ff$ and $\norm{X}_2$, respectively.
	For a matrix $X$,
	the notation $\rank \dkh{X}$ stands for its rank;
	$\orth(X)$ refers to the set of orthonormal bases for its range space;
	and $\sigma_{\min} (X)$ denotes its smallest singular value. 
	For $X, Y \in \stiefel$, we define $\Pj{X} := XX\zz$, 
	$\Pv{X} := I_n - XX\zz$, $\dsp{X}{Y} := XX\zz - YY\zz$, 
	and $\dist{X}{Y} := \norm{ \dsp{X}{Y} }\ff$.
	Other notations will be introduced at their first appearance. 

\subsection{Organization}

	The rest of this paper is organized as follows. 
	In Section \ref{sec:model}, we introduce a novel projection splitting model 
	with so-called subspace constraints, 
	and investigate the structure of associated Lagrangian multipliers.
	Then we propose a federated algorithm to solve this model 
	based on an ADMM-like framework in Section \ref{sec:algorithm}.
	Convergence properties of the proposed algorithm are studied in Section \ref{sec:convergence-analysis}.
	Numerical experiments on a variety of test problems are presented in Section \ref{sec:numerical-result} 
	to evaluate the performance of the proposed algorithm.
	We conclude the paper in the last section.

\section{Projection Splitting Model}

\label{sec:model}

	We first motivate the proposed projection splitting model, then derive a low-rank 
	formula for Lagrangian multipliers associated with the so-called subspace constraints.
	This low-rank formula is essential to make the ADMM-like approach practical in its
	application to the proposed model with large-scale data matrices.

\subsection{Pursuit of an Optimal Subspace, Not Basis}

\label{subsec:ps}

	It is worth emphasizing that both the objective function $f$ and the feasible region $\stiefel$
	of problem \eqref{eq:opt-trace} are invariant under the transformation $Z \rightarrow ZO$ 
	for any orthogonal matrix $O\in\R^{p\times p}$.
	In essence, we are to pursue an optimal subspace rather than an optimal basis.
	Indeed, as is well-known, a global minimizer of \eqref{eq:opt-trace} can be any orthonormal basis matrix 
	for the optimal subspace spanned by the $p$ left singular vectors associated with the largest $p$ singular 
	values of $A$.  In addition, according to the discussions of \citet{Liu2013}, 
	the first-order stationarity condition of 
	\eqref{eq:opt-trace} can be expressed as follows.
	\begin{equation}
		\label{eq:sta-trace}
		\Pv{Z} AA\zz Z = 0 \mbox{~~and~~} Z \in \stiefel.
	\end{equation}

	As is mentioned earlier, we have a division of $A = [A_1\; A_2\; \dotsb\; A_d]$ 
	into $d$ column blocks and the $i$-th block $A_i$ is stored at client $i$.  
	Therefore, the objective function $f(Z)$ can be recast as a finite sum function.
	\begin{equation}\label{def:sum_fi}
		f(Z) = \sumiid f_i (Z) \mbox{~~with~~} f_i (Z) = -\dfrac{1}{2} \tr \dkh{ Z\zz A_iA_i\zz Z },
	\end{equation}
	where the $i$-th component of the objective function $f_i (Z)$ 
	can be evaluated only at client $i$ since $A_i$ is accessible only at client $i$. 
	To derive a federated algorithm, we introduce a set of local variables, $\{X_i\}_{i = 1}^d$, 
	where, at client $i$, $X_i \in \stiefel$ is a local copy of the global variable $Z \in \stiefel$
	(here $X$ instead of $Z$ is used to avoid possible future confusion).

	At this point, the conventional approach would impose constraints to equalize, 
	one way or another, all the local variables $\{X_i\}_{i = 1}^d$ with the global variable $Z$.  
	For instance, one could formulate the following optimization problem with 
	a separable objective function.
	\begin{equation}\label{eq:opt-var}
		\min\limits_{ X_i, \, Z \in \stiefel }  
		\quad \sumiid f_i(X_i) \quad \st \quad  X_i = Z, \;\; \iid.
	\end{equation}
	In this model, the set of variables is $( \{X_i\}_{i = 1}^d, Z )$ and $f_i$ is defined in \eqref{def:sum_fi}.
    	When an ADMM scheme is applied to this model,
	the subproblems corresponding to the local variables can all be solved simultaneously 
	and distributively.
	
	However, we observe that the equalizing constraints in \eqref{eq:opt-var} require that all local 
	variables $\{X_i\}_{i = 1}^d$ must be equal to each other.  
	In other words, model \eqref{eq:opt-var} dictates that 
	every client must find exactly the same orthonormal basis for the optimal subspace,
	which is of course extremely demanding but totally unnecessary.  
	Under such severely restrictive constraints, a consensus is much harder to reach than 
	when each client is allowed to find its own orthonormal basis, independent of each other.
	
	To relax the restrictive equalizing constraints in \eqref{eq:opt-var}, 
	we propose a new splitting scheme that equalizes subspaces spanned by local variables 
	instead of the local variables (matrices) themselves.
	For this purpose, we replace the equalizing constraints in \eqref{eq:opt-var} by 
	$X_iX_i\zz = ZZ\zz$ for $\iid$. 
	Since both sides of the equations are orthogonal projections (recall $X_i, Z \in \stiefel$), 
	we call our new splitting scheme \textit{projection splitting}.
	The resulting projection splitting model is as follows.
	\begin{equation}\label{eq:opt-ps}
		\min\limits_{ X_i, \, Z \in \stiefel }
		\quad  \sumiid f_i(X_i) \quad \st \quad  X_i X_i\zz = ZZ\zz, \;\; \iid.
	\end{equation}
	For ease of reference, we will call the constraints in \eqref{eq:opt-ps} \textit{subspace constraints}. 
	Obviously, these constraints are nonlinear and the optimization model \eqref{eq:opt-ps} is nonconvex.
	Conceptually, subspace constraints are easier to satisfy 
	than the variable splitting constraints $X_i = Z$.
	Computationally, however, subspace constraints do come with additional difficulties. 
	Since $X_iX_i\zz = ZZ\zz$ are large-size, 
	$n\times n$ matrix equations (compared to $n\times p$ in $X_i = Z$), 
	their corresponding Lagrangian multipliers are also large-size, $n\times n$ matrices.
	How to treat such large-size multiplier matrices is a critical algorithmic issue 
	that must be effectively addressed.

\subsection{Existence of Low-rank Multipliers}

\label{subsec:ex-low}

	By introducing dual variables, we derive a set of first-order 
	stationarity conditions
	for the projection splitting model \eqref{eq:opt-ps} in the following proposition, 
	whose proof will be given in Appendix \hyperref[apx:kkt-ps]{A}. 
	
	\begin{proposition}
		\label{prop:kkt-multipliers}
		Let $( \{X_i \in \stiefel\}_{i = 1}^d, Z \in \stiefel )$ be a feasible point of the projection splitting model.
		Then $Z$ is a first-order stationary point of \eqref{eq:opt-trace} 
		if and only if there exist symmetric matrices $\Lambda_i \in \Rnn$,
		$\Gamma_i \in \Rpp$, and $\Theta \in \Rpp$ so that 
		the following conditions hold:
		\begin{equation}\label{eq:kkt-multipliers}
				\sumiid \Lambda_i Z - Z \Theta = 0, \quad 
				 A_iA_i\zz X_i + X_i \Gamma_i + \Lambda_i X_i  = 0, \;\; \iid.
		\end{equation}

	\end{proposition}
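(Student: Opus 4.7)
The plan is to establish both directions of the equivalence by explicit algebra, using the key geometric consequence of feasibility. Since $X_i, Z \in \stiefel$ and $X_i X_i\zz = ZZ\zz$, the $p\times p$ matrix $Q_i := X_i\zz Z$ satisfies $Q_i Q_i\zz = X_i\zz Z Z\zz X_i = X_i\zz X_i X_i\zz X_i = I_p$, so $Q_i$ is orthogonal, $X_i = Z Q_i\zz$, and in particular $\Pv{Z} X_i = X_i - Z Z\zz X_i = X_i - X_i X_i\zz X_i = 0$ and $X_i X_i\zz Z = Z$. These two identities are the workhorses of both directions.

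For the forward direction, I would right-multiply the second equation of \cref{eq:kkt-multipliers} by $X_i\zz Z$ to obtain
\begin{equation*}
A_i A_i\zz Z + Z\,(Q_i\zz \Gamma_i Q_i) + \Lambda_i Z = 0, \quad \iid.
\end{equation*}
Summing over $i$ and invoking the first equation $\sum_i \Lambda_i Z = Z \Theta$ yields $AA\zz Z = -Z \bigl(\Theta + \sum_i Q_i\zz \Gamma_i Q_i\bigr) \in \mathrm{range}(Z)$, which is equivalent to $\Pv{Z} AA\zz Z = 0$, i.e.\ the stationarity condition \cref{eq:sta-trace} for \cref{eq:opt-trace}.

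For the reverse direction, assuming $\Pv{Z} AA\zz Z = 0$, I would construct the multipliers explicitly as
\begin{equation*}
\Gamma_i = -X_i\zz A_i A_i\zz X_i, \qquad \Lambda_i = -\Pv{Z} A_i A_i\zz X_i X_i\zz - X_i X_i\zz A_i A_i\zz \Pv{Z}, \qquad \Theta = 0,
\end{equation*}
all manifestly symmetric. To verify the second equation, I would expand $A_i A_i\zz X_i + X_i \Gamma_i + \Lambda_i X_i$: the term $X_i X_i\zz A_i A_i\zz \Pv{Z} X_i$ vanishes because $\Pv{Z} X_i = 0$, and the remaining three terms collapse to $(Z Z\zz - X_i X_i\zz) A_i A_i\zz X_i = 0$ using $X_i X_i\zz = Z Z\zz$. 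The first equation then reduces to $\sum_i \Lambda_i Z = -\Pv{Z} AA\zz Z = 0 = Z\Theta$, since $X_i X_i\zz Z = Z$ and $\Pv{Z} Z = 0$.

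The main obstacle is not computational difficulty but selecting the right symmetric formula for $\Lambda_i$: because the subspace constraint $X_i X_i\zz = Z Z\zz$ is nonlinear and structurally redundant (the matrix $\Lambda_i$ is far from unique), one needs insight to realize that it suffices to solve $\Lambda_i X_i = -\Pv{Z} A_i A_i\zz X_i$ with a symmetric $\Lambda_i$. The symmetrization trick $\Lambda_i = -B_i X_i\zz - X_i B_i\zz$ with $B_i := \Pv{Z} A_i A_i\zz X_i$ works precisely because $B_i\zz X_i = A_i A_i\zz \Pv{Z} X_i = 0$, a fact that again follows from feasibility $X_i X_i\zz = Z Z\zz$. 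As an alternative route, one could instead start from the Lagrangian $L = \sum_i f_i(X_i) + \sum_i \langle \Lambda_i, X_i X_i\zz - ZZ\zz\rangle + \sum_i \langle \Gamma_i, X_i\zz X_i - I_p\rangle + \langle \Theta, Z\zz Z - I_p\rangle$, derive the KKT-like system by differentiation, and then run the projection-onto-$Z$ argument above to translate to \cref{eq:sta-trace}.
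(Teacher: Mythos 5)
Your proof is correct and takes essentially the same route as the paper: the same explicit multiplier choices $\Theta=0$, $\Gamma_i=-X_i\zz A_iA_i\zz X_i$, $\Lambda_i=-\Pv{Z}A_iA_i\zz X_iX_i\zz-X_iX_i\zz A_iA_i\zz\Pv{Z}$ (which equals the paper's $\Lambda_i$ since feasibility gives $\Pv{X_i}=\Pv{Z}$), and the same key identities $\Pv{Z}X_i=0$, $X_iX_i\zz Z=Z$. The only cosmetic difference is in the ``multipliers $\Rightarrow$ stationarity'' direction, where you right-multiply the second KKT equation by $X_i\zz Z$ to land directly on $AA\zz Z\in\mathrm{range}(Z)$, whereas the paper left-multiplies by $\Pv{X_i}$ and right-multiplies by $X_i\zz$ to reach $\Pv{Z}AA\zz ZZ\zz=0$ before dropping the trailing $Z\zz$; both are a one-line rearrangement of the same algebra.
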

	
	The equations in \eqref{eq:kkt-multipliers} along with the feasibility represent
	the KKT conditions for the projection splitting model \eqref{eq:opt-ps}.
	The dual variables $\Lambda_i \in \Rnn$, $\Gamma_i \in \Rpp$, and $\Theta \in \Rpp$ 
	are the Lagrangian multipliers associated with the equality constraints 
	$X_iX_i\zz = ZZ\zz$, $X_i\zz X_i = I_p$, and $Z\zz Z = I_p$, respectively.
	
	It is straightforward (but rather lengthy, see Appendix \hyperref[apx:kkt-ps]{A}) 
	to verify that at any first-order stationary point $(\{X_i\}, Z)$ of \eqref{eq:opt-ps}
	(i.e., besides feasibility, \eqref{eq:sta-trace} also holds at $Z$), 
	the KKT conditions in \eqref{eq:kkt-multipliers} are satisfied by the following values of multipliers: 
	$\Theta = 0$, $\Gamma_i = -X_i\zz A_iA_i\zz X_i$, and
	\begin{equation}
		\label{eq:multiplier}
		\Lambda_i = - \Pj{X_i} A_iA_i\zz \Pv{X_i} 
		- \Pv{X_i}  A_iA_i\zz \Pj{X_i}, \quad \iid.
	\end{equation}
	Clearly, all $\Lambda_i$ satisfying \eqref{eq:multiplier} have a rank no greater than $2p$.  
	In fact, they are symmetrization of rank-$p$ matrices.
	As such, equation \eqref{eq:multiplier} provides a low-rank, closed-form formula 
	for calculating an estimated multiplier $\Lambda_i$ at a given $X_i$.  
	This formulation will play a prominent role in our algorithm, 
	for it effectively eliminates the costs of storing and updating $n\times n$ multiplier matrices.

	\begin{remark}
		\label{rmk:multipliers}
		We note that multipliers associated with the subspace constraints are non-unique.
		For example, in addition to \eqref{eq:multiplier}, the matrices
		\begin{equation*}
			\hat{\Lambda}_i = -A_iA_i\zz \Pv{X_i} - \Pv{X_i} A_iA_i\zz, \quad \iid,
		\end{equation*}
		also satisfy the KKT conditions in \eqref{eq:kkt-multipliers}.
		However, for $p \ll n$, the matrix $\Lambda_i$ in \eqref{eq:multiplier} has a much lower rank.
	\end{remark}

\section{Algorithm Development}
\label{sec:algorithm}

	In this section, we develop a federated algorithm to solve the projection splitting 
	model \eqref{eq:opt-ps} based on an ADMM-like framework.  Out of all the constraints,
	we only bring the subspace constraints in \eqref{eq:opt-ps}	
	into the augmented Lagrangian function:
	\begin{equation}\label{eq:lag-ps}
		\cL ( \{X_i\}, Z, \{\Lambda_i\} ) = \sumiid \cL_i (X_i, Z, \Lambda_i), 
	\end{equation}
	where for $\iid$,
	\begin{equation}\label{eq:Li}
		\cL_i (X_i, Z, \Lambda_i) =  f_i (X_i) 
		- \dfrac{1}{2} \jkh{ \Lambda_i, \dsp{X_i}{Z} } 
		+ \dfrac{\beta_i}{4} \dists{X_i}{Z},
	\end{equation}
	and $\beta_i > 0$ is a penalty parameter. The quadratic penalty term $\dists{X_i}{Z}$ 
	(see Section \ref{subsec:notation} for definition) measures the difference between 
	the two subspaces spanned by $X_i$ and $Z$, respectively.

	Conceptually, at iteration $k$, our algorithm consists of the following three steps.
	\begin{enumerate}
		
		\item[(1)] Each client updates its own local variable to $X_i^{(k + 1)}$, $\iid$, that
		is an approximate solution to the local subproblem below, 
			\begin{equation*}
			X_i^{(k + 1)} \approx \argmin_{X_i \in \stiefel} \;
			\cL_i (X_i, Z^{(k)}, \Lambda_i^{(k)}).
			\end{equation*}
		
		\item[(2)] Each client implicitly updates its own multiplier to $\Lambda_i^{(k+1)}$ for $\iid$.
		
		\item[(3)] The center updates the global variable to $Z^{(k + 1)}$ 
		to make a progress towards solving the global subproblem:
			\begin{equation*}
			\min_{Z\in\stiefel} \;
			\cL ( \{X_i^{(k+1)}\}, Z,\{\Lambda_i^{(k+1)}\}).
			\end{equation*}
		
	\end{enumerate}
	The first two steps can be concurrently carried out in $d$ clients,
	while the last step requires communications between the center and all the clients.
	In the next three subsections, we specify in more concrete terms 
	how these three steps are carried out.   A detailed algorithm statement 
	will be given in Section \ref{subsec:framework}, and the issue of data security
	will be discussed in Section \ref{subsec:alg-sec}.

\subsection{Subproblems for Local Variables}

\label{subsec:sub-x}

	It is straightforward to derive that the subproblem for the local variables $\{X_i\}_{i = 1}^d$ has
	the following equivalent form.
	\begin{equation}
		\label{eq:ps-sub-x}
		\min\limits_{X_i \in \stiefel} \quad
		h_i^{(k)}(X_i) := - \dfrac{1}{2} \tr \dkh{ X_i\zz H_i^{(k)} X_i },
	\end{equation}
	where, for $\iid$, 
	\begin{equation}
		\label{eq:Hik}
		H_i^{(k)} = A_iA_i\zz + \Lambda_i^{(k)} + \beta_i \Pj{Z^{(k)}}.
	\end{equation} 
	Clearly, \eqref{eq:ps-sub-x} is a standard eigenvalue problem where one computes
	a $p$-dimensional dominant eigenspace of an $n \times n$ real symmetric matrix.
	As a subproblem, \eqref{eq:ps-sub-x} needs not to be solved to a high precision.  
	In fact, we have discovered two inexact-solution conditions 
	that ensure both theoretical convergence and good practical performance.  
	It is important to note that using an iterative eigensolver, one does not need to compute 
	nor store the $n \times n$ matrix $H_i^{(k)}$ 
	since it is accessed through matrix-(multi)vector multiplications.

	The first condition is a sufficient reduction in function value.
	\begin{equation}
		\label{eq:ps-sub-x-con-1}
		h_i^{(k)} ( X_i^{(k)} ) - h_i^{(k)} ( X_i^{(k+1)} ) 
		\geq \dfrac{c_1}{c_1^{\prime}\norm{A_i}_2^2 + \beta_i}
		\norm{ \Pv{X_i^{(k)}} H_i^{(k)} X_i^{(k)} }\fs,
	\end{equation}
	where $c_1 > 0$ and $c_1^{\prime} > 0$ are two constants independent of $\beta_i$.
	This kind of conditions has been used to analyze convergence of iterative algorithms
	for solving trace minimization problems with orthogonality constraints \citep{Liu2013,Gao2018}.

	The second condition is a sufficient decrease in KKT violation.
	\begin{equation}\label{eq:ps-sub-x-con-2}
		\norm{ \Pv{X_i^{(k + 1)}} H_i^{(k)} X_i^{(k+1)} }\ff 
		\leq \delta_i \norm{ \Pv{X_i^{(k)}} H_i^{(k)} X_i^{(k)} }\ff,
	\end{equation}
	where $\delta_i \in [ 0, 1 )$ is a constant independent of $\beta_i$.
	This condition frequently appears in 
	inexact augmented Lagrangian based approaches \citep{Eckstein2013,Liu2019}. 
	It will play a crucial role in our theoretical analysis.
	
	The above two conditions, much weaker than optimality conditions of \eqref{eq:ps-sub-x}, 
 	are sufficient for us to derive global convergence of our ADMM-like framework.  
	In practice, it usually takes very few iterations of a certain iterative eigensolver, 
	such as SSI, LMSVD \citep{Liu2013} and SLRPGN \citep{Liu2015b},
	to meet these two conditions.

\subsection{Formula for Low-rank Multipliers}
\label{subsec:updating-multipliers}

	Now we consider updating the multipliers $\{ \Lambda_i \}_{i = 1}^d$
	associated with the subspace constraints in \eqref{eq:opt-ps}.
	In a regular ADMM algorithm, multiplier $\Lambda_i$ would be updated by a dual ascent step.
	\begin{equation*}\label{eq:dualascend}
		\Lambda_i^{(k+1)} = \Lambda_i^{(k)} - \tau_i\beta_i \dsp{X_i^{(k+1)}}{Z^{(k+1)}},
	\end{equation*}
	where $\tau_i > 0$ is a step size.  
	However, the above dual ascent step requires to store an $n \times n$ matrix at each client, 
	which can be prohibitive when $n$ is large.   

	In our search for an effective multiplier-updating scheme, 
	we derived an explicit, low-rank formula~\eqref{eq:multiplier} in Section \ref{subsec:ex-low} 
	that is satisfied at any first-order stationary point, namely,
	\begin{equation}
		\label{eq:ps-mult}
		{\Lambda}_i^{(k+1)} = X_i^{(k+1)} (W_i^{(k+1)})\zz +  W_i^{(k+1)}(X_i^{(k+1)})\zz,
	\end{equation}
	where, for $\iid$,
	\begin{equation}
		\label{eq:Wk}
		W_i^{(k+1)} = 	-\Pv{X_i^{(k+1)}} A_iA_i\zz X_i^{(k+1)}.
	\end{equation}
	With this low-rank expression, one can produce matrix-(multi)vector products 
	involving $\Lambda_i$ without any storage besides $X_i$ 
	(optionally one more $n \times p$ matrix $W_i$ for computational convenience).

	We note that $\Lambda_i$ in formula~\eqref{eq:ps-mult} is independent of the global variable $Z$.
	Thus, we choose to ``update" $\Lambda_i$ after $X_i$ and before $Z$.

\subsection{Subproblem for Global Variable}

\label{subsec:sub-z}

	The subproblem for the global variable $Z$ can also be 
	rearranged into a standard eigenvalue problem.
	\begin{equation}
		\label{eq:ps-sub-z}
		\min\limits_{Z\in\stiefel} \quad q^{(k)}(Z) := - \dfrac{1}{2} \tr \dkh{ Z\zz Q^{(k)} Z},
	\end{equation}
	where $Q^{(k)}$ is a sum of $d$ locally held matrices:
	\begin{equation}
		\label{eq:Qik}
		Q^{(k)} = \sumiid Q_i^{(k)} \mbox{ ~with~ }
		Q_i^{(k)} = \beta_i \Pj{X_i^{(k+1)}} - \Lambda_i^{(k+1)}.
	\end{equation}
	
	As is the case for local variables, we also approximately solve \eqref{eq:ps-sub-z}
	by an iterative eigensolver.  However, in the federated environment, 
	each iteration of a certain eigensolver  requires at least one round of communications.
	Therefore, in order to reduce the overall communication overheads,
	we employ a single iteration of SSI to inexactly solve the subproblem \eqref{eq:ps-sub-z}. 
	Starting from the current iterate $Z^{(k)} \in \stiefel$ stored at every client, 
	one computes
	\begin{equation}
		\label{eq:Zk+1}
		Z^{(k+1)} \in \orth\left(\sumiid Y_i^{(k)}\right) 
		\mbox{ with } Y_i^{(k)} = Q_i^{(k)} Z^{(k)},
	\end{equation} 
	which only invokes one round of communications per outer-iteration.
	Here, the local products $Y_i^{(k)} = Q_i^{(k)} Z^{(k)}$, $\iid$, 
	are calculated using the expressions for $Q_i^{(k)}$ and $\Lambda_i^{(k)}$,
	see \eqref{eq:ps-mult},\eqref{eq:Wk} and \eqref{eq:Qik},
	which can be carried out distributively at each client with $O(np^2)$ 
	floating-point operations without actually forming any $n \times n$ matrices.
	
	Comparing to the federated SSI in \eqref{eq:SSI}, we use $Q_i^{(k)}$ 
	in \eqref{eq:Zk+1} to mask the local data matrix $A_i A_i\zz$.  We will 
	see that this masking operation not only protects data privacy,
	but also significantly accelerates convergence, as will be
	empirically shown in Section \ref{sec:numerical-result}.


\subsection{Algorithm Description}

\label{subsec:framework}

	We now formally present the proposed algorithmic framework as Algorithm \ref{alg:FAPS} below, 
	named \textit{federated ADMM-like algorithm with projection splitting} 
	and abbreviated to FAPS.
	At iteration $k$, client $i$ first updates $X_i^{(k + 1)}$ and $W_i^{(k + 1)}$ using local data,
	and then computes $Y_i^{(k )}=Q_i^{(k )} Z^{(k)}$ that is transmitted to the center server.
	Finally, the center server updates $Z^{(k + 1)}$ and sends the results to all the clients. 
	This procedure is repeated until convergence.
	Upon termination, the final iterate will be an orthonormal basis 
	for an approximately optimal eigenspace of $AA\zz$.
	Same as the federated SSI \eqref{eq:SSI}, FAPS requires one round of communications per iteration.

	\begin{algorithm2e}[ht!]
		\label{alg:FAPS}
				
		\caption{Federated ADMM-like algorithm with projection splitting (FAPS).} 
		
		\KwIn{data matrix $A = [A_1, \dotsc, A_d]$, penalty parameters $\hkh{ \beta_i }$.}
		
		Set $k := 0$.
		Initialize $( \{X_i^{(0)}\}, Z^{(0)} )$ and 
		compute $\{\Lambda_i^{(0)}\}$ by \eqref{eq:ps-mult}.
		
		\While{``not converged"}
		{
			
			\For{each client $i \in \{1, 2, \dotsc, d\}$}
			{
				
				Find $X_i^{(k+1)} \in \stiefel$ that satisfies 
				\eqref{eq:ps-sub-x-con-1} and \eqref{eq:ps-sub-x-con-2}.
				
				Update the matrix $W_i^{(k+1)}$ by \eqref{eq:Wk}.
				
				Compute $Y_i^{(k)}$ in \eqref{eq:Zk+1} and send it to the center server. 
				
			}
		
			\For{the center server}
			{
			
				Update  $Z^{(k+1)} \in \stiefel$ by \eqref{eq:Zk+1} and sent it to all the clients.
				
			}			
			Set $k := k + 1$.
		}
		\KwOut{$Z^{(k)}$.} 
	\end{algorithm2e}

\subsection{Preservation of Intrinsic Privacy of Local Data}
\label{subsec:alg-sec}

	In FAPS, at iteration $k$ the public information is the global variable value $Z^{(k)}$ and the 
	shared information from client $i$ is the product $Y_i^{(k)}= Q_i^{(k)}Z^{(k)}\in\R^{n \times p}$, 
	see \eqref{eq:Zk+1}.  Suppose that the center has access to all available information 
	$\{Z^{(k)}\}$ and $\{Y_i^{(k)}\}$ at all iterations.
	Then would it be possible for the center to recover any local data matrix $A_i A_i\zz$?
	
	First observe that the $n$ by $n$ mask matrix $Q_i^{(k)}$ varies from iteration to iteration
	and the available equation at iteration $k$, $Q_i^{(k)} Z^{(k)}=Y_i^{(k)}$, is $n$ by $p$ for $p\ll n$.  
	Hence, it is generally impossible to obtain a mask matrix from solving the associated under-determined 
	linear system of equations at the corresponding iteration.  By examining the expressions \eqref{eq:ps-mult},
	\eqref{eq:Wk} and \eqref{eq:Qik}, we can derive the relationship between a local data matrix
	and other involved quantities, whether known or unknown.
	
	\begin{proposition}\label{priv-FAPS}
	In Algorithm~1 at iteration $k$, for $\iid$ there holds
	\begin{equation}\label{eqn-QZ=Y}
		\left(\beta_i\Pj{X} + \Pv{X}A_iA_i\zz\Pj{X} + \Pj{X}A_iA_i\zz\Pv{X}\right)Z^{(k)} = Y_i^{(k)}
	\end{equation}
	for $X = X_i^{(k+1)}$, where only $Z^{(k)}, Y_i^{(k)} \in \R^{n\times p}$ are shared quantities. 
	\end{proposition}

	It is evident that equations in \eqref{eqn-QZ=Y} cannot be used to exactly solve for the local 
	data matrix $A_iA_i\zz$ without knowing sufficiently many local iterates $X_i^{(k+1)}$ 
	(beside $\beta_i$), which are all privately owned by client $i$.  That is, an exact recovery 
	of $A_iA_i\zz$ by the center from shared quantities is impossible in Algorithm~FAPS.
	
	In light of Propositions~\ref{priv-SSI} and \ref{priv-FAPS},  we conclude that, in contrast to
	the federated SSI algorithm, Algorithm~FAPS can preserve intrinsic data privacy,
	as defined in Definition~\ref{def:privacy}.

\section{Convergence Analysis}

\label{sec:convergence-analysis} 

	In this section, we rigorously establish 
	the global convergence of our proposed Algorithm \ref{alg:FAPS}
	under the following mild assumptions on the algorithm parameters.
	
	\begin{assumption}\label{asp:beta-svd}
		We assume the following conditions hold.
		
		(i) The algorithm parameter $\delta_i$ in \eqref{eq:ps-sub-x-con-2} satisfies
		\begin{equation*}
			0 \leq \delta_i < \dfrac{\underline{\sigma}}{2 \sqrt{\rho d}},  \;\; \iid,
		\end{equation*}
		where $\rho := \max_{i,j = 1, \dotsc, d}\left\{ \beta_i / \beta_j \right\} \geq 1$ 
		and $\underline{\sigma} := \sqrt{1 - 1 / (2\rho d)} \in (0, 1)$.\\

		(ii) For a sufficiently large constant $\omega_i > 0$, the penalty parameter $\beta_i$ satisfies 
		\begin{equation*}
			\beta_i \geq \omega_i \norm{A}\fs,  \;\; \iid.
		\end{equation*}
	\end{assumption}
	\begin{remark}
		The above assumptions are imposed only for the purpose of theoretical analysis.
		An expression for $\omega_i$ will be given in Appendix \hyperref[apx:global]{B}. 
	\end{remark}
	
	We are now ready to present the global convergence and the worst-case complexity of FAPS.
	For brevity, we use the following simplified notations.
	\begin{equation}
		\label{eq:Dik}
		\mathbf{d}_i^{(k)} := \dist{X_i^{(k)}}{Z^{(k)}}, \mbox{~~for~~} \iid, \mbox{~~and~~} k \in \N.
	\end{equation}
	
	\begin{theorem}
		\label{thm:global}
		Let $X_i^{(0)} \in \stiefel$ and $Z^{(0)} \in \stiefel$ satisfy 
		\begin{equation}
			\label{eq:initial}
			\ditz \leq \frac{1}{\rho d},
			\quad \iid,
		\end{equation}
		and the sequence $\{ \{X_i^{(k)}\}_{i = 1}^d, Z^{(k)} \}$ be generated by Algorithm \ref{alg:FAPS}.
		Under Assumption \ref{asp:beta-svd},
		$\{ Z^{(k)} \}$ has at least one accumulation point,
		and any accumulation point is a first-order stationary point of problem~\eqref{eq:opt-trace}. 
		Moreover, there exists a constant $C>0$ 
		so that for any $N > 1$, it holds that 
		\begin{equation*}
			\min\limits_{k = 0, \dotsc, N-1} 
			\hkh{
				\norm{\Pv{Z^{(k)}} AA\zz Z^{(k)} }\fs + \dfrac{1}{d} \sumiid \ditk 
			} 
			\leq \dfrac{C}{N}.
		\end{equation*}
	\end{theorem}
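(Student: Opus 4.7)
The plan is to use the augmented Lagrangian $\cL$ from \cref{eq:lag-ps} as a merit function and show that, with penalty parameters $\beta_i$ large enough relative to $\|A\|_{\mathrm{F}}^2$, $\cL$ decreases by a quantity controlling the first-order residuals at each iteration. Decomposing
\begin{equation*}
\cL^{(k+1)} - \cL^{(k)} = \Delta_X^{(k)} + \Delta_\Lambda^{(k)} + \Delta_Z^{(k)},
\end{equation*}
where the three differences correspond to updating $\{X_i\}$, then $\{\Lambda_i\}$, then $Z$, I would bound each term separately. Since $\cL_i(X_i, Z^{(k)}, \Lambda_i^{(k)})$ coincides with $h_i^{(k)}(X_i)$ up to a constant in $X_i$, condition \cref{eq:ps-sub-x-con-1} gives $\Delta_X^{(k)} \le -\sum_i \tfrac{c_1}{c_1'\|A_i\|_2^2+\beta_i}\|\Pv{X_i^{(k)}} H_i^{(k)} X_i^{(k)}\|_{\mathrm{F}}^2$. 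Likewise, $\cL(\cdot, Z, \cdot) \equiv q^{(k)}(Z)$ up to a constant in $Z$, so \cref{eq:ps-sub-z-con} gives $\Delta_Z^{(k)} \le -c_2\|\Pv{Z^{(k)}} Q^{(k)} Z^{(k)}\|_{\mathrm{F}}^2$.

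The main obstacle is controlling the multiplier step $\Delta_\Lambda^{(k)} = -\tfrac{1}{2}\sum_i\langle \Lambda_i^{(k+1)} - \Lambda_i^{(k)}, \dsp{X_i^{(k+1)}}{Z^{(k)}}\rangle$, because the closed-form update \cref{eq:ps-mult} is not a dual ascent step in the classical sense and can in principle \emph{increase} $\cL$. My plan is to exploit the special low-rank structure of \cref{eq:ps-mult}, together with \cref{eq:ps-sub-x-con-2}, to bound $\|\Lambda_i^{(k+1)} - \Lambda_i^{(k)}\|_{\mathrm{F}}$ by a quantity of the order $\|\Pv{X_i^{(k)}} H_i^{(k)} X_i^{(k)}\|_{\mathrm{F}} + \|A\|_{\mathrm{F}}^2 \dik$. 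Combined with the Cauchy--Schwarz estimate $|\langle \cdot, \dsp{X_i^{(k+1)}}{Z^{(k)}}\rangle| \le \|\Lambda_i^{(k+1)} - \Lambda_i^{(k)}\|_{\mathrm{F}} \dik[k{+}1]$ and the smallness of $\delta_i$ in \cref{asp:beta-svd}(i), the resulting positive contribution of $\Delta_\Lambda^{(k)}$ will be absorbed by $\Delta_X^{(k)}$ and by a fraction of the penalty term in $\Delta_Z^{(k)}$, provided $\beta_i$ is chosen large enough per \cref{asp:beta-svd}(ii). A crucial subsidiary invariant is $\dik \le 1/(\rho d)$, which I would prove by induction using the initialization \cref{eq:initial}, the monotone decrease of $\cL$ (hence of the $\tfrac{\beta_i}{4}\dikt^2$ terms up to a bounded surplus), and the bound $\delta_i < \underline{\sigma}/(2\sqrt{\rho d})$.

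Once the net inequality $\cL^{(k+1)} - \cL^{(k)} \le -c\bigl(\sum_i\|\Pv{X_i^{(k)}}H_i^{(k)}X_i^{(k)}\|_{\mathrm{F}}^2 + \|\Pv{Z^{(k)}}Q^{(k)}Z^{(k)}\|_{\mathrm{F}}^2 + \sum_i \beta_i \dikt^2\bigr)$ is established for some $c>0$, I would combine it with a lower bound on $\cL$ obtained by completing squares in \cref{eq:Li} (using the low-rank form of $\Lambda_i^{(k)}$ and boundedness of $X_i,Z$ on $\stiefel$) to conclude the sequence $\{\cL^{(k)}\}$ is convergent. Telescoping over $k=0,\dots,N-1$ then yields the $O(1/N)$ bound asserted.

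For the qualitative statement, compactness of $\stiefel$ ensures that $\{(\{X_i^{(k)}\},Z^{(k)})\}$ has at least one accumulation point. Along a convergent subsequence the telescoped bound forces $\dik\to 0$ and $\|\Pv{Z^{(k)}}Q^{(k)}Z^{(k)}\|_{\mathrm{F}}\to 0$, so in the limit $X_i X_i^\top = ZZ^\top$ and, after substituting the low-rank formula \cref{eq:ps-mult} into $Q^{(k)}$, the stationarity relation \cref{eq:sta-trace} for \cref{eq:opt-trace} drops out. The hardest step remains the multiplier control: packaging $\Delta_\Lambda^{(k)}$ so that, despite the non-standard update, the overall one-step inequality still dominates by the residuals we want to vanish.
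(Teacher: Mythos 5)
Your overall plan — using the augmented Lagrangian $\cL$ as a merit function, decomposing its one-step change into the $X$-update, $\Lambda$-update, and $Z$-update contributions, controlling the multiplier term via a Lipschitz-type bound on the closed-form update and absorbing it with the penalty and smallness of $\delta_i$, then telescoping — matches the paper's proof structure (Lemmas B.1--B.5, Corollaries B.6--B.8, Proposition B.9).

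There is, however, a genuine gap in how you propose to establish the key invariant $\ditk \le 1/(\rho d)$. You suggest proving it by induction ``using the monotone decrease of $\cL$ (hence of the $\frac{\beta_i}{4}\ditk$ terms up to a bounded surplus)''. This does not work: the decrease of $\cL$ only bounds the weighted \emph{sum} $\sum_i \frac{\beta_i}{4}\ditk$ up to a surplus of size $O(\|A\|_F^2)$, whereas the invariant is a per-index bound on each $\dik$ individually. Extracting a per-$i$ bound from the sum loses a factor of $d$ (at best gives $\ditk \lesssim 1/\rho$, not $1/(\rho d)$), and the surplus does not obviously shrink below $1/(\rho d)$ no matter how large $\beta_i$ is, because $\cL^{(0)}$ itself scales with $\beta_i$ when $\ditz$ is not zero. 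The paper avoids this by \emph{not} using $\cL$-monotonicity inside the induction; instead Lemma~\ref{le:dist-xk+1-zk} and Lemma~\ref{le:dist-zk} give direct one-step estimates on $\dists{X_i^{(k+1)}}{Z^{(k)}}$ and $\ditkp$ that explicitly track the constants, and the decisive per-$i$ contraction comes from condition~\eqref{eq:ps-sub-x-con-2} combined with the identity $\Pv{X_i}H_i^{(k)}X_i = \beta_i \Pv{X_i}Z^{(k)}(Z^{(k)})^\top X_i$ and the lower singular-value bound $\sigma_{\min}(X_i^\top Z)\ge \underline{\sigma}$, yielding $\dist{X_i^{(k+1)}}{Z^{(k)}} \le \sqrt{1/(2\rho d)}\,\dik$ (equation~\eqref{eq:dist-xxzz}). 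This contraction is what makes the induction close with the factor $1/(\rho d)$ intact. You will need to replace your $\cL$-based argument with these direct distance recursions. A secondary remark: the transition from $\|\Pv{Z^{(k)}}Q^{(k)}Z^{(k)}\|_F$ to $\|\Pv{Z^{(k)}}AA^\top Z^{(k)}\|_F$ that you flag as ``the hardest step'' is handled in the paper by introducing the auxiliary matrix $\bar Q^{(k)}$ and bounding $\|\Pv{Z}\bar Q Z\|_F$ by $\sum_i(\beta_i+4\|A_i\|_2^2)\dik$; this is a nontrivial but elementary triangle-inequality step once the $\dik$ bound is available, so your sketch is recoverable there.
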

	The proof of this theorem, being quite long and tedious, is left to Appendix \hyperref[apx:global]{B}.

\section{Numerical Experiments}
	
\label{sec:numerical-result}
	
	In this section, we evaluate the performance of FAPS 
	through comprehensive numerical experiments,
	which demonstrate its efficiency, robustness, and scalability.
	All the experiments are performed on a high-performance computing cluster 
	LSSC-IV\footnote{More information at \url{http://lsec.cc.ac.cn/chinese/lsec/LSSC-IVintroduction.pdf}.} 
	maintained at the State Key Laboratory of Scientific and Engineering Computing (LSEC), 
	Chinese Academy of Sciences.
	There are 408 nodes in the main part of LSSC-IV,
	and each node consists of two Intel Xeon Gold 6140 processors 
	(at $2.30$GHz $\times 18$) with $192$GB memory.
	The operating system of LSSC-IV is Red Hat Enterprise Linux Server 7.3.
	
	Note that FAPS is potentially useful under widely federated settings 
	with high communication costs and privacy concerns.
	The numerical experiments done on the cluster are just simulations 
	to observe the convergence rates of FAPS in comparison to others.
	{For our numerical results, 
	the number of iterations required by algorithms 
	(i.e., the speed of convergence) is the determining factor.}
	
	
\subsection{Test Problems}
	
\label{subsec:problem}

	Two classes of test problems are used in our experiments.
	The first class consists of synthetic problems randomly generated as follows.
	We construct a test matrix $A\in\Rnm$ (assuming $n\leq m$ without loss of generality)
	by its (economy-form) singular value decomposition
	\begin{equation}
		\label{eq:gen-A}
		A = U \Sigma V\zz,
	\end{equation}
	where both $U\in\Rnn$ and $V\in\R^{m\times n}$ are orthonormalization
	of matrices whose entries are random numbers drawn independently,
	identically and uniformly from $[-1,1]$,
	and $\Sigma \in \Rnn$ is a diagonal matrix with diagonal entries 
	\begin{equation}
		\label{eq:Sigma_ii}
		\Sigma_{ii}=\xi^{1-i}, \quad \iin,
	\end{equation}  
	for a parameter $\xi > 1$ that determines the decay rate of the singular values of $A$.
	In general, smaller decay rates (with $\xi$ closer to 1) correspond to more difficult cases. 
	
	The second class of test problems consists of matrices from four popular image data sets 
	frequently used in machine learning, including 
	MNIST\footnote{Available from \url{http://yann.lecun.com/exdb/mnist/}.}, 
	Fashion-MNIST\footnote{Available from \url{https://github.com/zalandoresearch/fashion-mnist}.}, 
	CIFAR-10\footnote{Available from \url{https://www.cs.toronto.edu/~kriz/cifar.html}.\label{note:CIFAR}},
	and CIFAR-100$^{\ref{note:CIFAR}}$.
	In both MNIST and Fashion-MNIST, the sample dimension is $n = 28 \times 28 = 784$
	and the number of sample is $m = 60000$;
	while in both CIFAR-10 and CIFAR-100, the sample dimension is $n=3 \times 32 \times 32 = 3072$
	with $m = 50000$.
	
\subsection{Implementation Details}
	
\label{subsec:impdetail}
	
	We use an adaptive strategy to tune the penalty parameters $\beta_i$,
	in which we periodically increase the penalty parameters value when 
	the projection distance has not seen a sufficient reduction. 
	Given initial values $\beta_i^{(0)}$, at iteration $k > 0$
	we first compute the projection distance $\dik$
	and then update the penalty parameter by the recursion rule:
	\begin{equation}
		\label{eq:adap}
		\beta_i^{(k+1)} = 
		\begin{cases}
			\dkh{1+ \theta} \beta_i^{(k)}, & \mbox{if} \bmod(k,5)=0 
			\mbox{~and~} \mathbf{d}_i^{(k - 5)} \leq \dkh{1 + \mu} \mathbf{d}_i^{(k)}, \\
			\beta_i^{(k)}, & \mbox{otherwise}.
		\end{cases}
	\end{equation}
	By default, we set $\beta_i^{(0)} = 0.15 \norm{A_i}_2^2$, 
	$\theta = 0.1$, and $\mu = 0.01$ in our implementation.
	
	We initialize the global variable $Z^{(0)}$ as orthonormalization of a random 
	$n \times p$ matrix whose entries follow the i.i.d.\!~uniform distribution in $[-1,1]$.  
	Then we set $X_i^{(0)} = Z^{(0)}$ ($\iid$).
	
	For solving subproblem~\eqref{eq:ps-sub-x} approximately,
	we choose to use SSI which, at outer iteration $k$, 
	generates an inner-iteration sequence
	$X_{i}^{(k)}(j)$ for $j = 0, 1, \dotsc$, with the warm-start $X_{i}^{(k)}(0) = X_i^{(k)}$.
	To reduce the computation costs, we use the following termination rule.
	\begin{equation}\label{eq:stop-x}
		\norm{ X_i^{(k)}(j) - X_i^{(k)}(j-1) }\ff
		\leq \epsilon_x \norm{ X_i^{(k)}(j) }\ff,
	\end{equation}	
	for a prescribed tolerance $\epsilon_x > 0$, which measures the relative change 
	between two consecutive inner iterates.   
	In our experiments, we set $\epsilon_x = 10^{-2}$ as the default value.
	For solving subproblem~\eqref{eq:ps-sub-z} approximately, starting from $Z^{(k)}$ 
	we take a single iteration of SSI to obtain $Z^{(k+1)}$, see \eqref{eq:Zk+1}.
	
	We terminate FAPS if either the following condition holds,
	\begin{equation}
		\abs{ \sumiid\norm{A_i\zz Z^{(k)}}\fs - \sumiid\norm{ A_i\zz Z^{(k - 1)} }\fs } 
		\leq 10^{-10} \sumiid\norm{A_i\zz Z^{(k)}}\fs,
		\label{eq:rela-kkt}
	\end{equation}
	or the maximum iteration number $\mathtt{MaxIter} = 3000$ is reached. 
	The condition \eqref{eq:rela-kkt} measures the relative change in objective function values.
	
	\begin{remark}
		We choose not to use the KKT violation as the stopping criterion since it requires
		extra communication overheads under a federated environment.
	\end{remark}
	
	In our experiments, we collect and compare four performance measurements:
	wall-clock time, total number of iterations, 
	scaled KKT violation defined by
	\begin{equation*}
		\dfrac{1}{\norm{A}\fs} \norm{ \Pv{Z^{(k)}} AA\zz Z^{(k)} }\ff,
	\end{equation*}
	and relative error in singular values defined by 
	${\norm{ \Sigma^{(k)} - \Sigma^{\ast} }\ff} / \norm{\Sigma^{\ast}}\ff$,
	where the diagonal matrices $\Sigma^{\ast} \in \Rpp$ and $\Sigma^{(k)} \in \Rpp$ 
	hold, respectively, the exact and computed dominant singular values.
	
\subsection{Competing Algorithms}

\label{subsec:competitors}
	 
	We compare the performances of FAPS mainly with 
	two closely related but representative algorithms.
	The first competing algorithm is an adaptation of the classic 
	SSI algorithm \citep{Rutishauser1970,Stewart1976,Stewart1981} 
	to the federated setting described in Section~\ref{subsec:distributed}.
	The second competing algorithm is called LocalPower \citep{Li2021communication},
	which is an accelerated version of SSI developed in federated mode.
	Originally, LocalPower was designed to communicate after every $q$ local subspace iterations.
	In order to guarantee convergence, later LocalPower applies a decay strategy 
	to gradually decrease the number of local subspace iterations.
	Specifically, LocalPower halves $q$ every round of communications until it reaches $1$.
	As suggested by \citet{Li2021communication}, 
	we choose the initial value of $q$ as $8$.
	It is worth mentioning that LocalPower can not preserve the privacy of local data either,
	since it boils down to vanilla SSI after $q$ reaches $1$.
	In our experiments, we adopt the same initialization and stopping criterion 
	as described in Section \ref{subsec:impdetail}.
	
	We implement FAPS, SSI, and LocalPower in C++ 
	with MPI for inter-process communication
	to the best of our ability.   
	Unless otherwise specified, the communication is realized 
	by the all-reduce operations in MPI.
	In our implementation, we use the C++ linear algebra library {\it Eigen}\footnote
	{Available from \url{http://eigen.tuxfamily.org/index.php?title=Main_Page}} 
	(version 3.3.8) for matrix computations.  In particular, 
	orthonormalization of an $n \times p$ matrix is done via the (economy-size) 
	QR factorization at a cost of $O(np^2)$ operations.
	
\subsection{Comprehensive Comparison on Synthetic Data}
	
\label{subsec:synthetic}
	
	We now compare the performances of the three algorithms 
	on a variety of synthetic test problems,
	run under the aforementioned federated environment 
	with the number of computing clients fixed at $d = 128$.  
	We construct four groups of test problems based on \eqref{eq:gen-A},
	in each of which there is only one parameter varying while all others are fixed.
	Specifically, the problem parameter settings for $A$ are given as follows 
	(recall that $n$ is the number of rows,  $m$ is the number of columns, 
	$p$ is the number of principal components to be computed, and 
	$\xi$ determines the decay rate of singular values):
	\begin{enumerate}
		
		\item[(1)] $n = 1000 + 1000j$ for $j = 1, 2, 3, 4$, while $m = 128000$, $p = 20$, and $\xi = 1.01$;
		
		\item[(2)] $m = 128000 + 32000j$ for $j = 1, 2, 3, 4$, while $n = 2000$, $p = 10$, and $\xi = 1.01$;
		
		\item[(3)] $p = 10j$  for $j = 1, 2, 3, 4$, while $n = 1000$, $m = 128000$, and $\xi = 1.01$;
		
		\item[(4)]  $\xi =1+ 10^{-(1 + j)/2}$ for $j = 1, 2, 3, 4$, while $n = 1000$, $m = 256000$, and $p = 10$.	
		
	\end{enumerate}
	The numerical results for the above four test scenarios are depicted 
	in Figure \ref{fig:synthetic}, with two quantities, wall-clock time in seconds
	and number of iterations taken recorded for every experiment.
	The average scaled KKT violation and relative error of every experiment 
	are tabulated in Table \ref{tb:test_synthetic}. 
	It should be evident from these numerical results that 
	FAPS clearly outperforms {SSI and LocalPower
	in terms of iteration numbers.}
	
	\begin{figure}[ht!]
		\centering
		\subfigure[Varying $n$]{
			\label{subfig:bar_n_time}
			\includegraphics[width=0.4\textwidth]{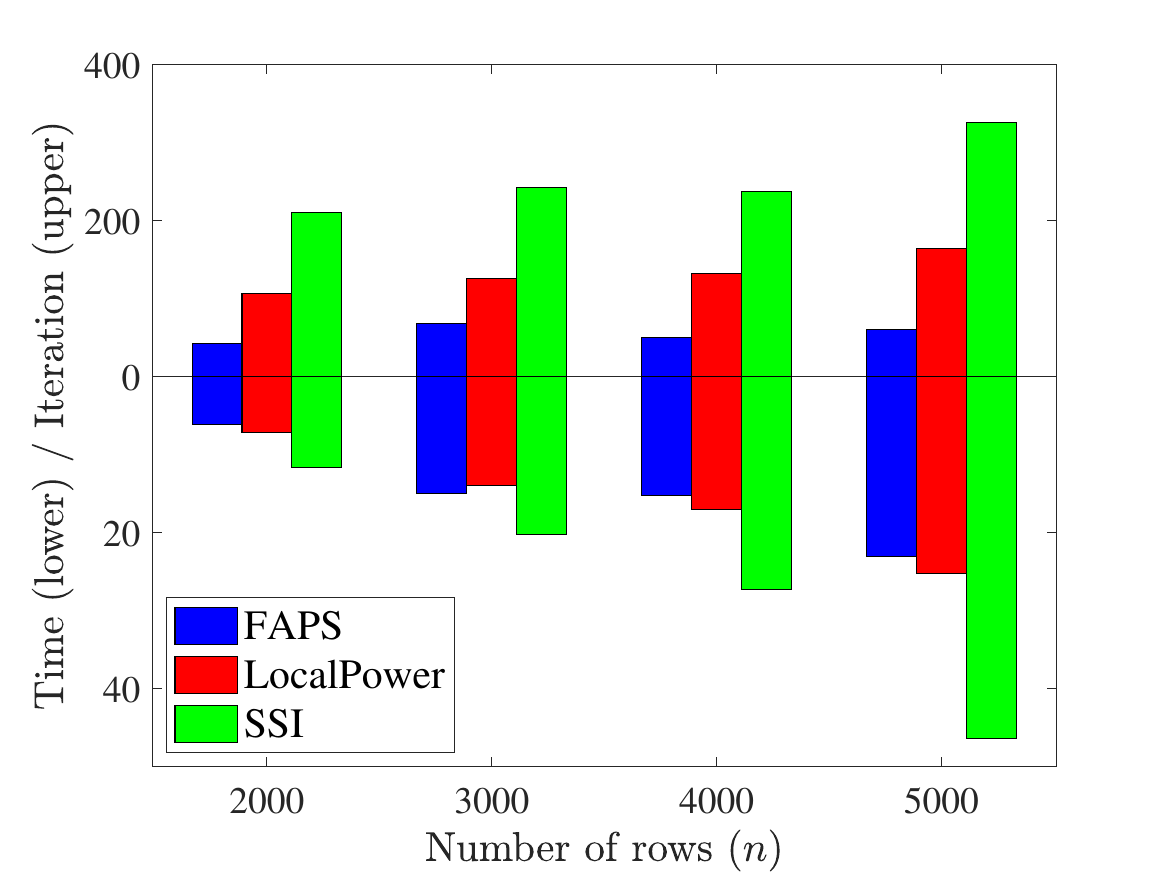}
		}
		\subfigure[Varying $m$]{
			\label{subfig:bar_m_time}
			\includegraphics[width=0.4\textwidth]{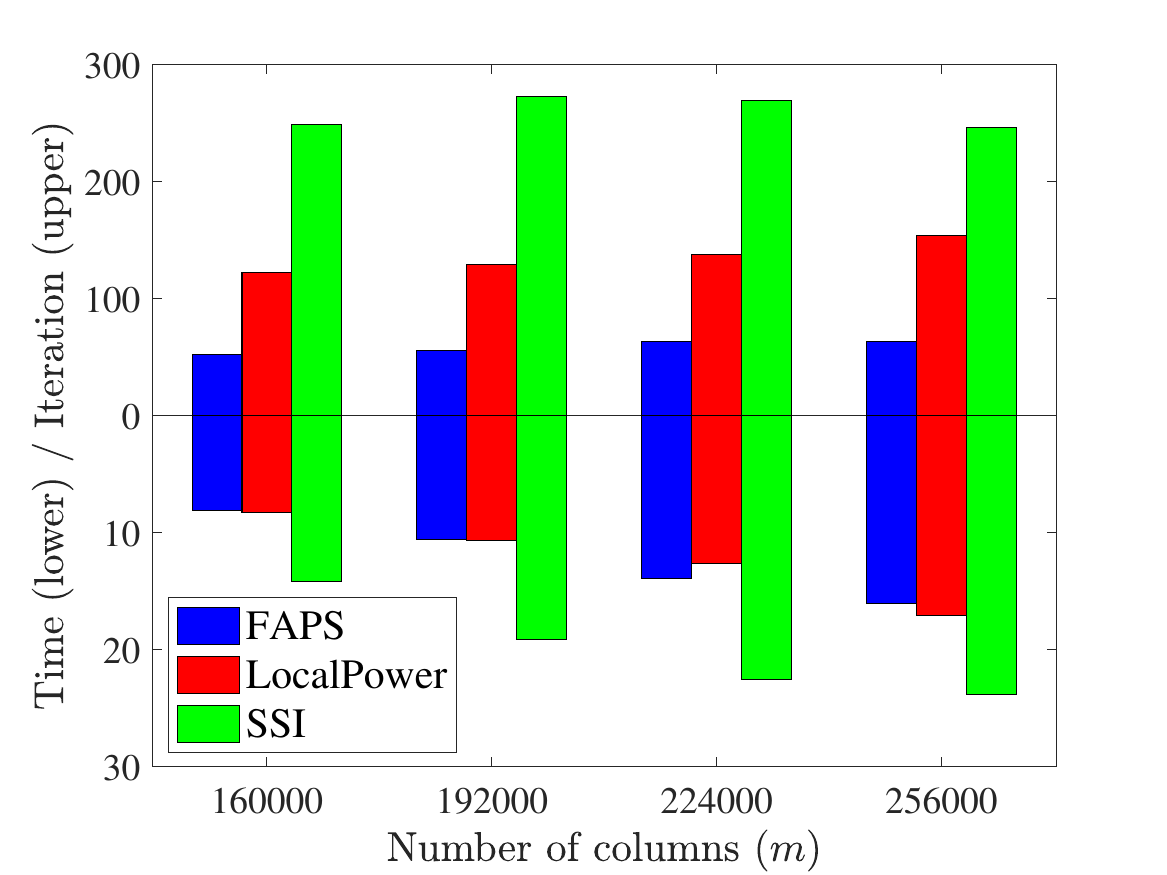}
		}\\
		\subfigure[Varying $p$]{
			\label{subfig:bar_p_time}
			\includegraphics[width=0.4\textwidth]{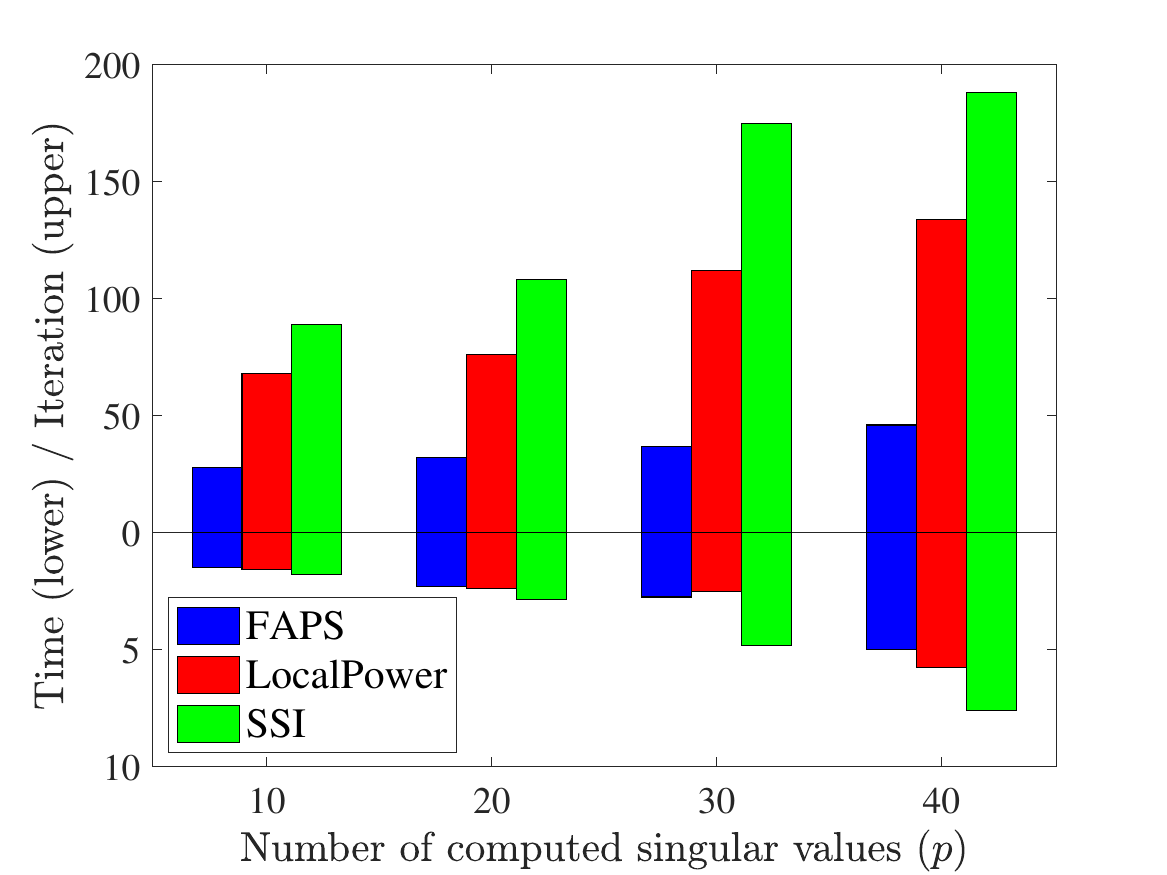}
		}
		\subfigure[Varying $\xi$]{
			\label{subfig:bar_xi_time}
			\includegraphics[width=0.4\textwidth]{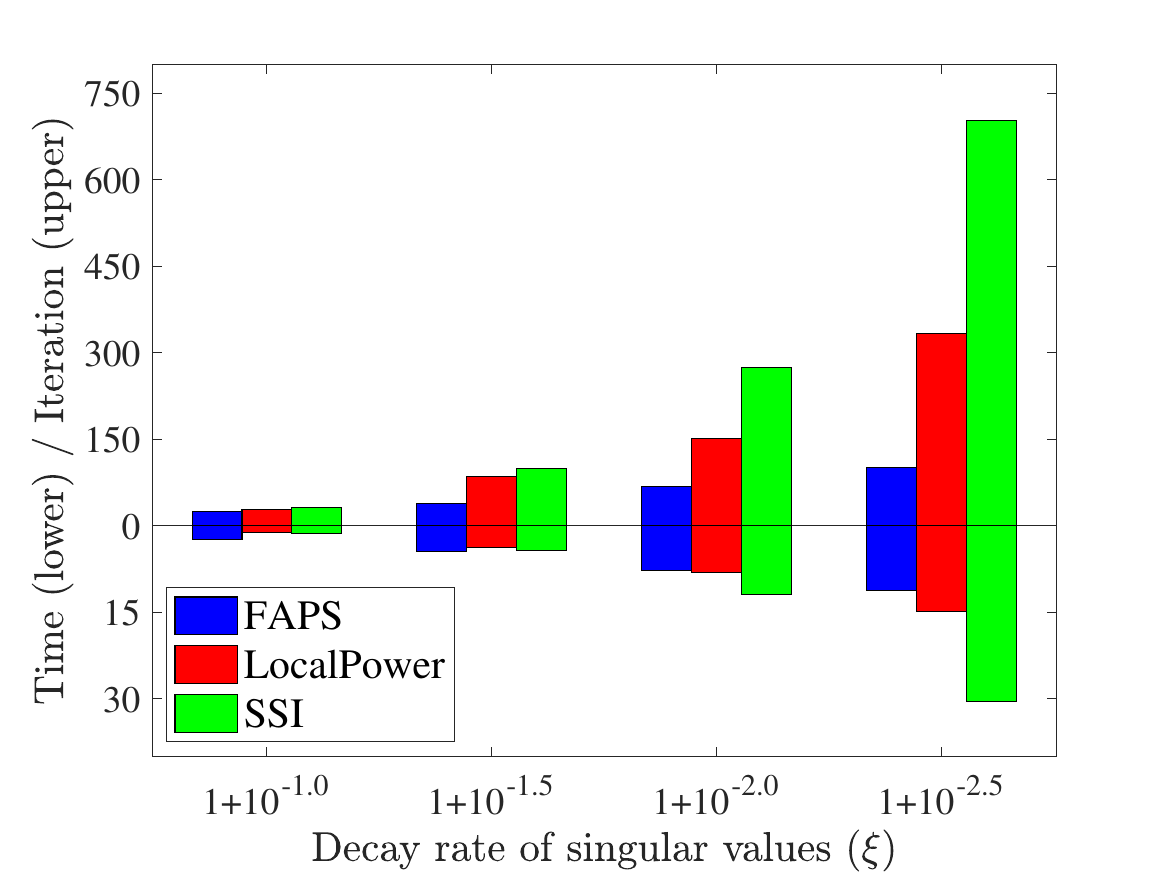}
		}
		
		\caption{Comparison of SSI, LocalPower, and FAPS on synthetic data.}
		\label{fig:synthetic}
	\end{figure}
	
	\begin{table}[ht!]
		\centering
		\begin{tabular}{c|c|c|c|c|c|c} 
			\toprule 
			& \multicolumn{3}{c|}{Average scaled KKT violation} & \multicolumn{3}{c}{Average relative error}\\
			\midrule
			& SSI  & LocalPower & FAPS & SSI  & LocalPower  & FAPS \\
			\midrule
			Varying $n$ & 1.94e-06  & 1.89e-06 & 1.87e-06 & 1.06e-07 & 9.56e-08 &8.69e-08 \\
			\midrule
			Varying $m$ & 1.95e-06 & 1.92e-06 &1.90e-06 & 1.07e-07 & 1.02e-07 & 9.97e-08 \\
			\midrule
			Varying $p$ & 1.95e-06 & 1.91e-06 & 1.92e-06 & 8.30e-08 & 9.18e-08 &  1.15e-07 \\
			\midrule
			Varying $\xi$ & 4.09e-06  & 3.94e-06 & 3.87e-06 & 3.15e-07 & 3.08e-07 & 2.99e-07 \\
			\bottomrule 
		\end{tabular}
		\caption{Average errors of SSI, LocalPower and FAPS on synthetic data.}
		\label{tb:test_synthetic}
	\end{table}
	
	It is worth emphasizing that since these three algorithms incur more or less the same 
	amount of communication overhead per iteration, the total amount of information 
	exchanged is roughly proportional to the numbers of iterations.  Hence, the rapid
	convergence of FAPS (in terms of iteration number) translates into not only 
	computational but also communicational efficiency.  
	On the other hand, we caution that the advantage of FAPS may not always
	be as large as shown in our experiments when some parameter values go beyond the tested ranges .

\subsection{Comparison on Image Data Sets}
	
	\label{subsec:real}

	We next evaluate the performances of the three algorithms 
	on four image data sets popular in machine learning research.   
	The numbers of computed principal components and computing clients in use 
	are set to $p = 5$ and $d =16$, respectively.
	Numerical results from this experiment are given in Figure \ref{fig:image} and Table \ref{tb:test_mnist}.
	Again, in terms of the number of iterations taken,
	FAPS always dominates SSI and LocalPower.
	These results indicate that the observed superior performance of FAPS is not just limited 
	to synthetic matrices.
	
	\begin{figure}[ht!]
		\centering
		\subfigure[Total number of iterations]{
			\label{subfig:bar_image_iter}
			\includegraphics[width=0.4\textwidth]{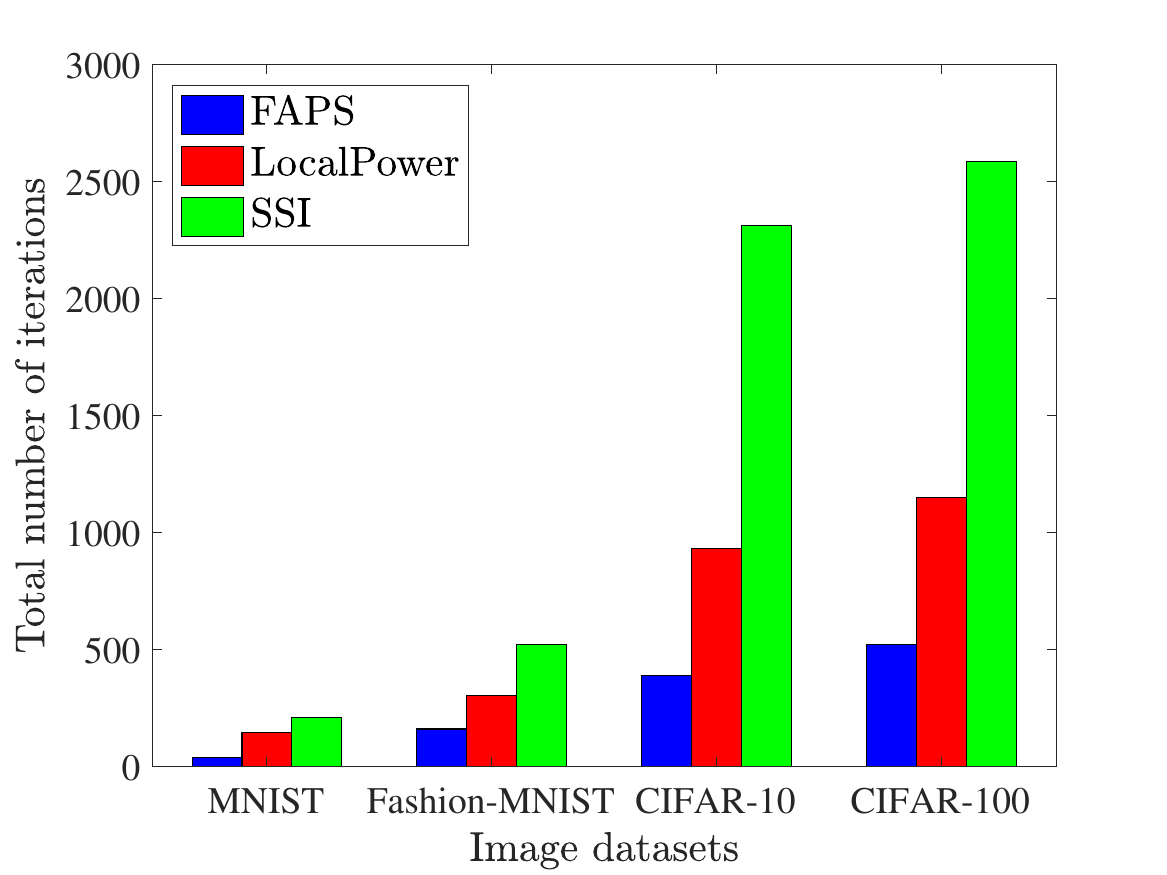}
		}
		\hspace{2mm}
		\subfigure[Wall-clock time in seconds]{
			\label{subfig:bar_image_time}
			\includegraphics[width=0.4\textwidth]{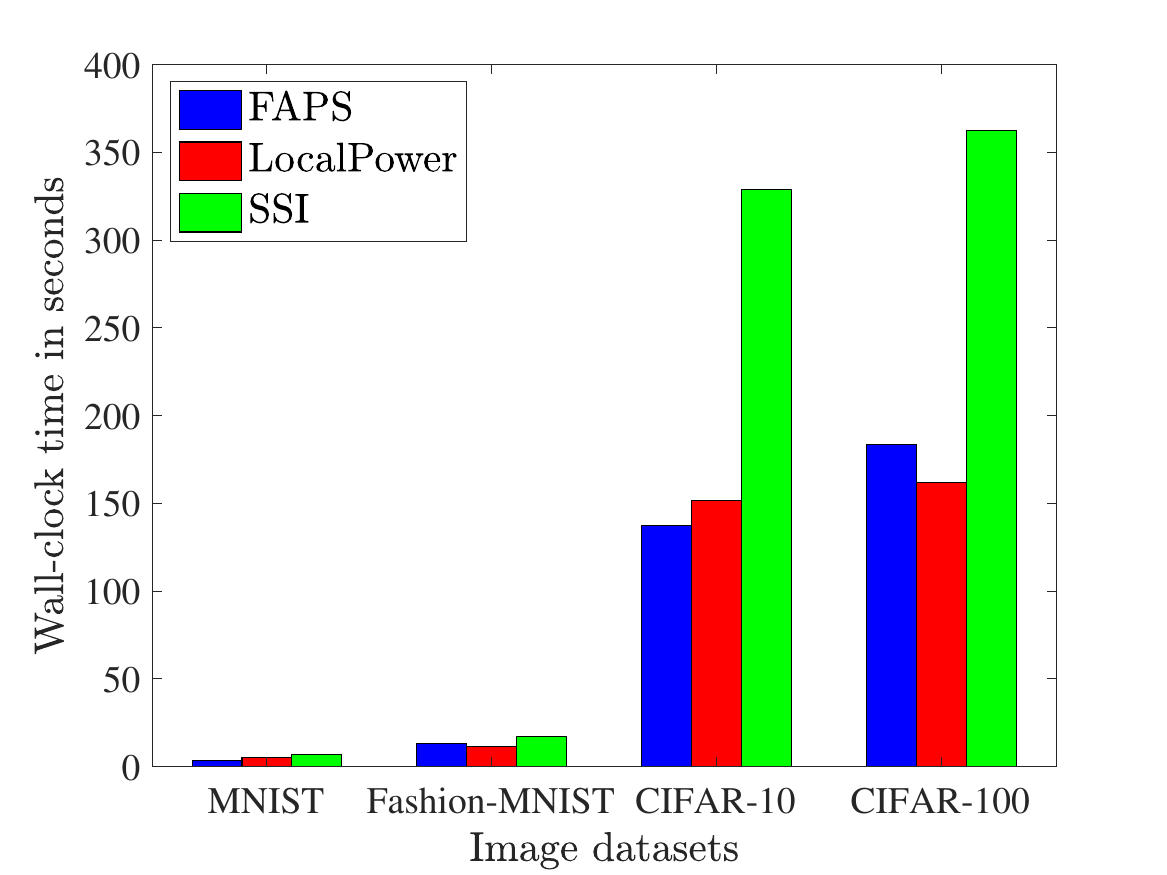}
		}
		\caption{Comparison of SSI, LocalPower, and FAPS on four image data sets.}
		\label{fig:image}
	\end{figure}
	
	\begin{table}[ht!]
		\centering
		\begin{tabular}{c|c|c|c|c|c} 
			\toprule
			\multicolumn{3}{c|}{Average scaled KKT violation} & 
			\multicolumn{3}{c}{Average relative error}\\
			\midrule
			SSI  & LocalPower & FAPS & SSI & LocalPower & FAPS\\
			\midrule
			4.34e-06 & 4.59e-06 & 4.42e-06 & 5.80e-08 & 6.27e-08 & 5.06e-08 \\
			\bottomrule 
		\end{tabular}
		\caption{Average errors of SSI, LocalPower and FAPS on four image data sets.}
		\label{tb:test_mnist} 
	\end{table}

\subsection{Empirical Convergence Rate}

\label{subsec:rate}

	In this subsection, we take a closer look at the empirical convergence rate of FAPS.
	The behaviors of SSI and LocalPower are also studied in comparison.
	The test matrix $A$ is randomly generated by a similar manner 
	as \eqref{eq:gen-A} with $n = 1000$ and $m = 160000$
	except that the diagonal entries of $\Sigma$ satisfy the following arithmetic distribution.
	\begin{equation*}
		\Sigma_{ii} = 1 - \dfrac{i - 1}{n - 1} \dkh{1 - \dfrac{1}{\kappa}}, \quad \iin,
	\end{equation*}
	where $\kappa > 0$ is the condition number of $A$.
	In this experiment, we choose three different values of $\kappa$ to test, 
	including $10$, $100$, and $1000$.
	The numbers of computed principal components and computing clients in use 
	are set to $p = 10$ and $d =128$, respectively.
	
	As illustrated in Figure \ref{fig:rate}, all three algorithms appear to converge at linear rates 
	where the rate of FAPS is the fastest, followed by that of LocalPower.
	We observe that the larger $\kappa$ is, the faster the singular values decay, 
	and the easier the problem tends to be.
	On the tested instances, however, the advantage of FAPS relative to the other two 
	appears insensitive to the change of the decay rate in singular values.

	\begin{figure}[ht!]
		\centering
		\subfigure[$\kappa = 10$]{
			\label{subfig:kappa_10}
			\includegraphics[width=0.3\textwidth]{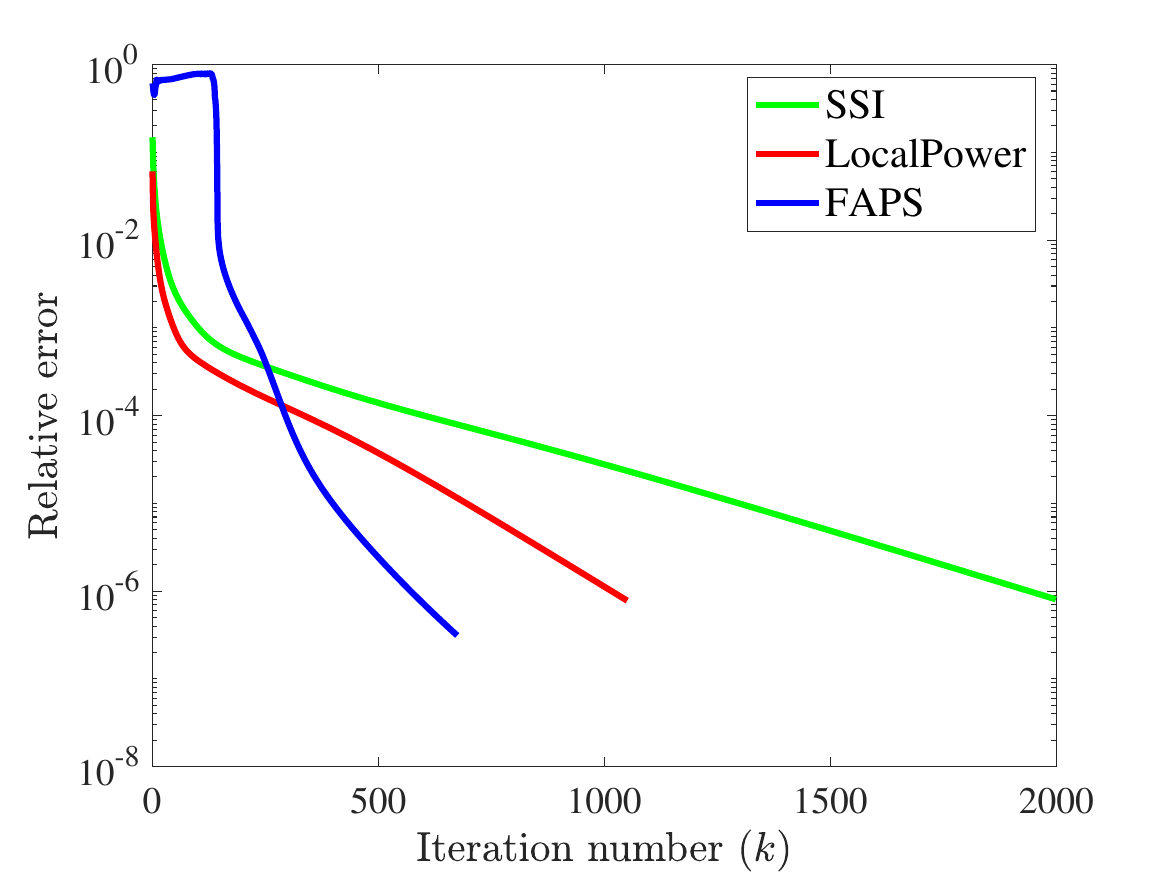}
		}
		\subfigure[$\kappa = 100$]{
			\label{subfig:kappa_100}
			\includegraphics[width=0.3\textwidth]{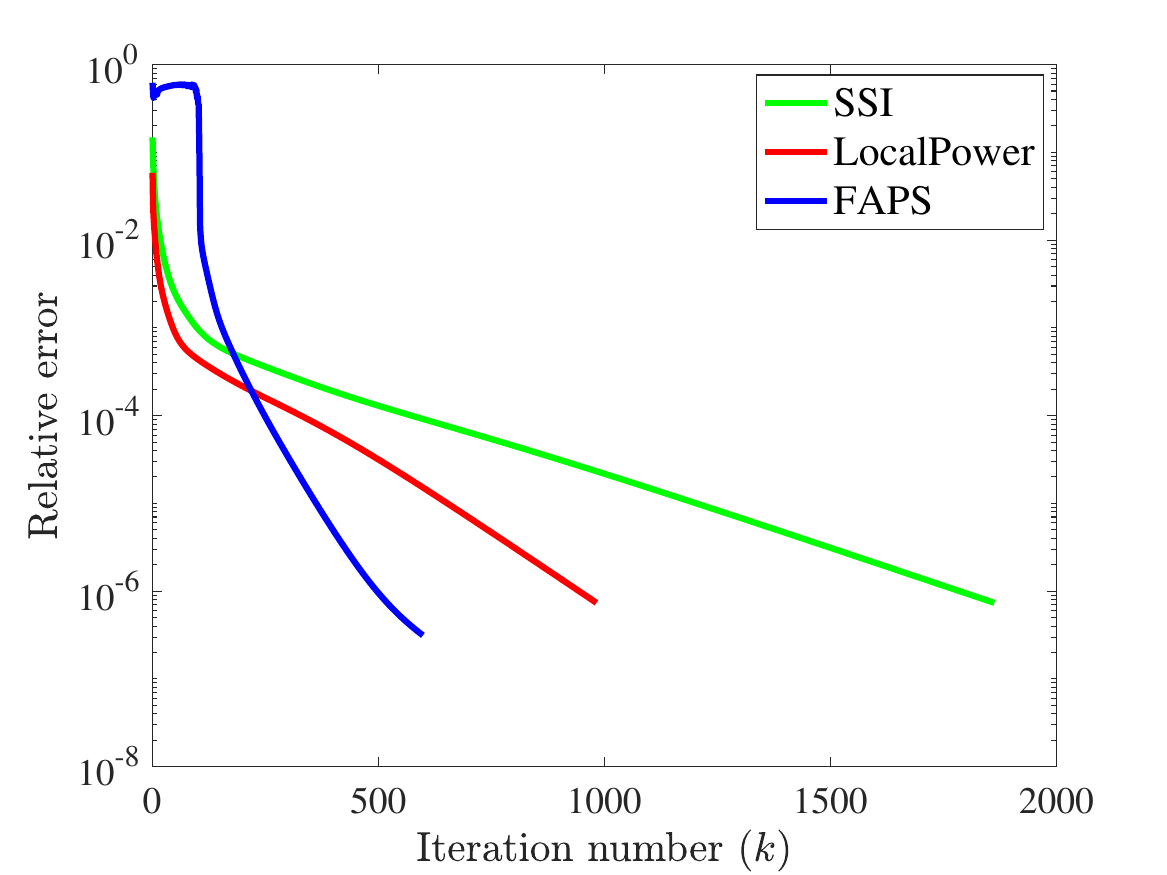}
		}
		\subfigure[$\kappa = 1000$]{
			\label{subfig:kappa_1000}
			\includegraphics[width=0.3\textwidth]{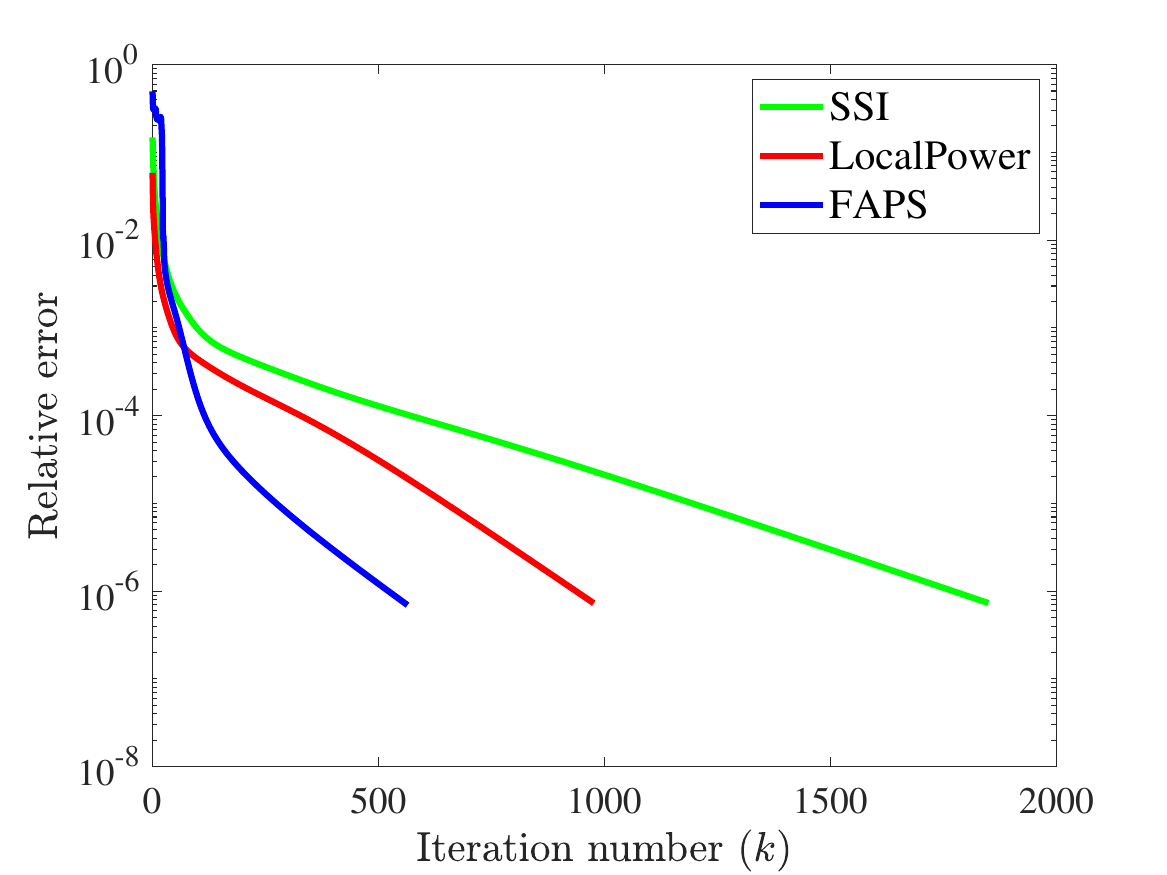}
		}
		
		\caption{Comparison of empirical convergence rates.}
		\label{fig:rate}
	\end{figure}

\subsection{Comparison on Unevenly Distributed Data}

\label{subsec:uneven}

	In the previous numerical experiments,
	the data set in test is uniformly distributed into $d$ clients,
	that is, $m_1 = m_2  = \dotsb = m_d$.
	Now we consider the scenario that 
	the whole data set is unevenly distributed across the network.
	In the following experiment, a synthetic matrix $A$, 
	generated by \eqref{eq:gen-A} with $n = 1000$, $m = 36000$, and
	$\xi = 1.01$, is tested with $p =10$ and $d = 8$.  
	We split the matrix $A$ into $d$ clients such that
	$m_i = 1000i$ for $\iid$.
	The corresponding numerical results are presented in Table \ref{tb:uneven},
	which demonstrate that FAPS attains better performances than SSI and LocalPower.
	In particular, we note that the number of iterations (i.e., the rounds of communications) 
	required by FAPS is about one-third of that by LocalPower, 
	even though the wall-clock times consumed by the two are close, 
	implying that FAPS could be far more communication-efficient 
	whenever the cost of communication is high.

	\begin{table}[ht!]
		\centering
		\begin{tabular}{c|c|c|c|c} 
			\toprule 
			Algorithm & Iteration & Wall-clock~\!(s) & Scaled KKT violation & Relative error \\
			\midrule
			SSI                 &  337  &  32.73  &  1.94e-06  &  1.06e-07 \\
			\midrule
			LocalPower  &  164  &  16.52  &  1.96e-06  &  1.08e-07 \\
			\midrule
			FAPS             &  55  &  14.93  &  1.80e-06  &  7.67e-08 \\
			\bottomrule 
		\end{tabular} 
		\caption{Comparison of SSI, LocalPower, and FAPS on unevenly distributed data.}
		\label{tb:uneven} 
	\end{table}

\section{Conclusions}

	In addition to the traditional performance criteria of time and space efficiency,
	algorithms for computing PCA on large-scale data sets in federated environments 
	need to consider reducing communication overhead and, in many modern applications, 
	preserving data privacy.  Towards achieving these goals, we propose a new
	model and an ADMM-like algorithmic framework, called FAPS, that
	seeks consensus on a subspace rather than on a matrix variable.   
	
	This FAPS framework generates local subproblems that are 
	standard symmetric eigenvalue problems to which well-developed solvers readily apply.  
	From the viewpoint of the global variable,
	FAPS can be interpreted as an enhanced version of the federated SSI that can
	reduce the communication costs and safeguard local data privacy.  In addition,
	FAPS is equipped with two key algorithmic features:
	(i) multipliers are represented by a closed-form, low-rank formula; and
	(ii) solution accuracies for subproblems are appropriately controlled at low levels.
	
	Most existing theoretical works on convergence of ADMM algorithms for solving 
	non-convex optimization problems impose restrictive assumptions on iterates 
	or multipliers.  In our specific case for FAPS to solves a non-convex optimization 
	model with coupled nonlinear constraints, we have derive global convergence and 
	worst-case complexity results only under mild conditions on the choices of algorithm 
	parameters. 
	
	We have conducted comprehensive numerical experiments to compare FAPS with
	two competing algorithms for Federated PCA.  The test results are strongly in favor of FAPS.  
	Most notably, the number of iterations required by FAPS is significantly 
	fewer than that required by others up to one or two orders of magnitudes.   
	We believe that this fast empirical convergence rate is made possible by our
	subspace-splitting idea, producing greatly relaxed feasibility restrictions 
	relative to the classic variable-splitting strategy.
		
	Finally, we mention that there still remains a range of issues, theoretical or practical, 
	to be further studied in order to fully understand the behavior and realize the potential 
	of FAPS and its variants.
	We also note that the projection splitting idea can be generalized to a wider class of problems.

\section*{Acknowledgments}

	The work of the first author was supported by 
	the National Natural Science Foundation of China (No. 11971466 and 11991020).
	The work of the second author was supported in part by 
	the National Natural Science Foundation of China (No. 12125108, 11991021, and 12288201), 
	Key Research Program of Frontier Sciences, 
	Chinese Academy of Sciences (No. ZDBS-LY-7022), 
	the National Center for Mathematics and Interdisciplinary Sciences, 
	Chinese Academy of Sciences and the Youth Innovation Promotion Association, 
	Chinese Academy of Sciences.
	The work of the third author was supported in part by 
	the Shenzhen Science and Technology Program 
	(No. GXWD20201231105722002-20200901175001001).

\appendix

\section*{Appendix A. Proof of the Existence of Low-rank Multipliers} \label{apx:kkt-ps}

	In this appendix, we prove Proposition \ref{prop:kkt-multipliers} 
	to interpret the existence of low-rank multipliers 
	associated with the subspace constraints in \eqref{eq:opt-ps}.

	\begin{proof}[Proof of Proposition \ref{prop:kkt-multipliers}]
		We start with proving the ``only if" part, 
		and hence assume that $Z$ is a first-order stationary point of \eqref{eq:opt-trace}. 
		Let $\Theta = 0$, $\Gamma_i = - X_i\zz A_iA_i\zz X_i$, 
		and $\Lambda_i = -\Pv{X_i} A_iA_i\zz X_iX_i\zz - X_iX_i\zz A_iA_i\zz \Pv{X_i}$ with $\iid$. 
		Then matrices $\Theta$, $\Gamma_i$ and $\Lambda_i$ are symmetric and $\rank \dkh{\Lambda_i} \leq 2p$. 
		And it can be readily verified that 
		\begin{equation*}
			A_iA_i\zz X_i + X_i \Gamma_i + \Lambda_i X_i 
			= \Pv{X_i} A_iA_i\zz X_i - \Pv{X_i} A_iA_i\zz X_i
			= 0,
			\;\; \iid.
		\end{equation*}
		Moreover, it follows from the fact $X_i X_i\zz = Z Z\zz$ and stationarity of $Z$ that
		\begin{equation*}
			\sumiid\Lambda_i Z - Z \Theta
			= \sumiid\dkh{ -\Pv{X_i} A_iA_i\zz X_iX_i\zz - X_iX_i\zz A_iA_i\zz \Pv{X_i} } Z 
			=  -\Pv{Z} AA\zz Z = 0.
		\end{equation*}
		Hence, $(\{X_i\},Z)$ satisfies the condition \eqref{eq:kkt-multipliers} 
		under the specific combination of $\Theta$, $\Gamma_i$, and $\Lambda_i$.
		
		Now we prove the ``if" part and assume that there exist symmetric matrices $\Theta$, $\Gamma_i$, and $\Lambda_i$ 
		such that the feasible point $(\{X_i\},Z)$ satisfies the condition \eqref{eq:kkt-multipliers}. 
		By virtue of \eqref{eq:kkt-multipliers}, 
		we obtain $A_iA_i\zz X_i = -X_i \Gamma_i - \Lambda_i X_i$, and hence it holds that
		\begin{equation*}
			 \sumiid \Pv{X_i} A_iA_i\zz X_i X_i\zz 
			=  - \sumiid \Pv{X_i} \dkh{ X_i\Gamma_i + \Lambda_i X_i } X_i\zz 
			= - \Pv{Z} \dkh{\sumiid \Lambda_i Z} Z\zz =0,
		\end{equation*}
		where the second equality follows from the fact $X_i X_i\zz = Z Z\zz$,
		and the third equality follows from \eqref{eq:kkt-multipliers}.
		On the other hand, we have 
		\begin{equation*}
			\sumiid \Pv{X_i} A_iA_i\zz X_i X_i\zz 
			=  \sumiid \Pv{Z} A_iA_i\zz Z Z\zz 
			=  \Pv{Z} AA\zz Z Z\zz.
		\end{equation*}
		Combining the above two relationships, we arrive at 
		$\Pv{Z} AA\zz Z Z\zz = 0$, which further implies
		$\Pv{Z} AA\zz Z = 0$. 
		Therefore, $Z$ is a first-order stationary point of \eqref{eq:opt-trace}.
		We complete the proof.
	\end{proof}

\section*{Appendix B. Proof of the Global Convergence} \label{apx:global}

	In this appendix, we prove Theorem \ref{thm:global} 
	to establish the global convergence of Algorithm \ref{alg:FAPS}.
	To begin with, we give an explicit expression of the constant $\omega_i > 0$ 
	in Assumption \ref{asp:beta-svd} as follows:
	\begin{equation*}
		\omega_i = \max 
		\hkh{ 
			c_1^{\prime}, \;
			\dfrac{12 \rho d \sqrt{p}}{c_1 \underline{\sigma}^2}, \;
			\dfrac{4\sqrt{2} \dkh{1 + \sqrt{2 \rho d}}}{\underline{\sigma} - 2\sqrt{\rho d} \delta_i}, \;
			16\rho d \sqrt{p}, \; 
			\dfrac{ 4 (1 + \sqrt{2\rho d}) }{ c_1 \underline{\sigma}^2 \rho d}
		},
		\;\; \iid.
	\end{equation*}
	In addition, it is clear that $\norm{A}\ff \geq \norm{A_i}\ff \geq \norm{A_i}_2$.
%
%
%
%

	Next, in order to prove Theorem \ref{thm:global},
	we establish a few lemmas and corollaries to make preparations.
	In their proofs, we omit the superscript $(k)$ to save space with a slight abuse of notations, and use the superscript $+$ to take the place of $(k+1)$.
	
	\begin{lemma}
		\label{le:dist-bridge}
		Suppose Assumption \ref{asp:beta-svd} holds 
		and $( \{X_i^{(k)}\}, Z^{(k)} )$ is the $k$-th iterate generated by Algorithm \ref{alg:FAPS} 
		and satisfies that $\dik \leq \sqrt{1 / (\rho d)}$, $\iid$.
		Then it holds that
		\begin{equation*}
			h_i^{(k)} (X_i^{(k)}) - h_i^{(k)} (X_i^{(k+1)})
			\geq \dfrac{1}{4}  c_1 \underline{\sigma}^2 \beta_i \ditk, \quad \iid.
		\end{equation*}
	\end{lemma}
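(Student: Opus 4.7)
The plan is to apply the inexact-subproblem bound in \cref{eq:ps-sub-x-con-1} and then separately estimate its right-hand side from below and its denominator from above. Concretely, it suffices to prove that under the stated hypotheses
\begin{equation*}
\norm{\Pv{X_i^{(k)}} H_i^{(k)} X_i^{(k)}}\fs \;\geq\; \tfrac{1}{2}\,\underline{\sigma}^2\,\beta_i^2\,\ditk,
\end{equation*}
and that the denominator satisfies $c_1^{\prime}\norm{A_i}_2^2+\beta_i\le 2\beta_i$; combining these two facts with \cref{eq:ps-sub-x-con-1} yields the claimed inequality immediately.

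The first step is an algebraic simplification of $H_i^{(k)} X_i^{(k)}$. Using the low-rank multiplier formula \cref{eq:ps-mult}--\cref{eq:Wk}, together with $(X_i^{(k)})\zz X_i^{(k)}=I_p$ and $\Pv{X_i^{(k)}}X_i^{(k)}=0$, I would show that $\Lambda_i^{(k)} X_i^{(k)} = -\Pv{X_i^{(k)}} A_iA_i\zz X_i^{(k)}$, which exactly cancels the off-range part of $A_iA_i\zz X_i^{(k)}$. Consequently
\begin{equation*}
\Pv{X_i^{(k)}} H_i^{(k)} X_i^{(k)} \;=\; \beta_i\,\Pv{X_i^{(k)}} Z^{(k)}(Z^{(k)})\zz X_i^{(k)},
\end{equation*}
so the task reduces to bounding $\|\Pv{X_i^{(k)}} Z^{(k)}(Z^{(k)})\zz X_i^{(k)}\|\fs$ from below in terms of $\ditk$.

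The main obstacle is this second step: a sharp spectral comparison between a projected product and the projection distance. Writing $C=(Z^{(k)})\zz X_i^{(k)}$ and letting $\lambda_1,\dots,\lambda_p\in[0,1]$ be the eigenvalues of $C\zz C$, a direct Frobenius-norm expansion gives $\|\Pv{X_i^{(k)}} Z^{(k)}(Z^{(k)})\zz X_i^{(k)}\|\fs = \sum_j \lambda_j(1-\lambda_j)$, while $\ditk = 2\sum_j(1-\lambda_j)$. The hypothesis $\ditk\le 1/(\rho d)$ therefore forces each $1-\lambda_j\le 1/(2\rho d)$, i.e.\ $\lambda_j \ge 1-1/(2\rho d) = \underline{\sigma}^2$. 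Substituting this uniform lower bound for $\lambda_j$ yields
\begin{equation*}
\sum_j \lambda_j(1-\lambda_j)\;\ge\; \underline{\sigma}^2 \sum_j(1-\lambda_j) \;=\; \tfrac{1}{2}\,\underline{\sigma}^2\,\ditk,
\end{equation*}
which is the desired bound on the projected gradient norm.

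Finally, the denominator control is routine: since $\omega_i\ge c_1^{\prime}$ by the definition of $\omega_i$ and $\beta_i\ge \omega_i\|A\|\fs\ge c_1^{\prime}\|A_i\|_2^2$, we obtain $c_1^{\prime}\|A_i\|_2^2+\beta_i\le 2\beta_i$. Plugging both estimates into \cref{eq:ps-sub-x-con-1} gives
\begin{equation*}
h_i^{(k)}(X_i^{(k)}) - h_i^{(k)}(X_i^{(k+1)}) \;\ge\; \dfrac{c_1}{2\beta_i}\cdot \dfrac{1}{2}\underline{\sigma}^2 \beta_i^2 \ditk \;=\; \tfrac{1}{4}c_1\underline{\sigma}^2\beta_i\ditk,
\end{equation*}
as claimed. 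The only nonroutine ingredient is the eigenvalue argument that converts the smallness assumption $\ditk\le 1/(\rho d)$ into the uniform lower bound $\lambda_j\ge \underline{\sigma}^2$; everything else is bookkeeping.
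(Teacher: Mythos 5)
Your proof is correct and follows essentially the same route as the paper: bound the denominator $c_1'\norm{A_i}_2^2+\beta_i\le 2\beta_i$ via \cref{asp:beta-svd}, use the low-rank multiplier identity $\Lambda_i^{(k)}X_i^{(k)}=W_i^{(k)}$ to reduce $\Pv{X_i^{(k)}}H_i^{(k)}X_i^{(k)}$ to $\beta_i\Pv{X_i^{(k)}}Z^{(k)}(Z^{(k)})\zz X_i^{(k)}$, and then compare $\sum_j\hat\sigma_j^2(1-\hat\sigma_j^2)$ with $\tfrac{1}{2}\ditk=\sum_j(1-\hat\sigma_j^2)$ using the uniform bound $\hat\sigma_j^2\ge\underline\sigma^2$ forced by the smallness hypothesis. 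The only cosmetic difference is that you phrase the spectral argument in terms of eigenvalues $\lambda_j$ of $C\zz C$ rather than singular values $\hat\sigma_j$ of $C$, which is equivalent.
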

	
	\begin{proof}
		It follows from Assumption \ref{asp:beta-svd} that 
		$\beta_i > c_1^{\prime} \norm{A}\fs \geq c_1^{\prime} \norm{A_i}_2^2$, 
		which together with \eqref{eq:ps-sub-x-con-1} yields that
		\begin{equation}\label{eq:des-x}
			h_i ( X_i ) - h_i ( X_i^+ ) \geq \dfrac{c_1}{2\beta_i} \norm{ \Pv{X_i} H_i X_i }\fs.
		\end{equation}
		According to the definition of $H_i$ and $\Lambda_i$, we have
		\begin{equation}\label{eq:rgrad-h}
			\Pv{X_i} H_i X_i
			= \Pv{X_i} \dkh{ A_iA_i\zz + \Lambda_i+ \beta_i Z Z\zz } X_i 
			= \beta_i \Pv{X_i} Z Z\zz X_i.
		\end{equation}
		Suppose $\hat{\sigma}_1, \dotsc, \hat{\sigma}_p$ are the singular values of $X_i\zz Z$.
		It is clear that $0 \leq \hat{\sigma}_i \leq 1$ 
		and $\dit = 2\sum_{j=1}^p \dkh{ 1 - \hat{\sigma}_j^2 }$ for any $\iid$.
		By simple calculations, we have
		\begin{equation*}
			\norm{ \Pv{X_i} Z Z\zz X_i }\fs 
			= \tr \dkh{ X_i\zz Z Z\zz X_i } - \tr \dkh{ \dkh{ X_i\zz Z Z\zz X_i}^2 }
			=  \sum\limits_{j=1}^p \hat{\sigma}_j^2 \dkh{ 1 - \hat{\sigma}_j^2 }.
		\end{equation*}
		Moreover, it follows from $\dik \leq \sqrt{1 / (\rho d)}$ 
		that $\sigma_{\min}( X_i\zz Z ) \geq \underline{\sigma}$,
		which implies that
		\begin{equation*}
			\norm{ \Pv{X_i} Z Z\zz X_i }\fs 
			=  \sum\limits_{j=1}^p \hat{\sigma}_j^2 \dkh{ 1 - \hat{\sigma}_j^2 } 
			\geq  \underline{\sigma}^2 \sum\limits_{j=1}^p \dkh{ 1 - \hat{\sigma}_j^2 }
			= \dfrac{1}{2} \underline{\sigma}^2 \dit.
		\end{equation*}
		This together with \eqref{eq:des-x} and \eqref{eq:rgrad-h} completes the proof.
	\end{proof}
	
	\begin{lemma}
		\label{le:dist-xk+1-zk}
		Suppose all the conditions in Lemma \ref{le:dist-bridge} hold.
		Then for any $\iid$, we have
		\begin{equation*}
			\dists{X_i^{(k+1)}}{Z^{(k)}} 
			\leq \dkh{ 1 - c_1 \underline{\sigma}^2 } \ditk 
			+ \dfrac{12}{\beta_i} \sqrt{p} \norm{A_i}\fs.
		\end{equation*}
	\end{lemma}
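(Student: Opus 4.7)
The plan is to convert the one-step subproblem decrease bounded below in \cref{le:dist-bridge} into a direct upper bound on $\dists{X_i^{(k+1)}}{Z^{(k)}}$, with the leftover trace terms controlled via the low-rank multiplier formula \cref{eq:ps-mult,eq:Wk}. The starting point is the algebraic identity $\tr(X_i\zz Z^{(k)}(Z^{(k)})\zz X_i) = p - \tfrac{1}{2}\dists{X_i}{Z^{(k)}}$ valid for $X_i \in \stiefel$, which when inserted into $h_i^{(k)}(X_i) = -\tfrac{1}{2}\tr(X_i\zz H_i^{(k)} X_i)$ with $H_i^{(k)} = A_iA_i\zz + \Lambda_i^{(k)} + \beta_i Z^{(k)}(Z^{(k)})\zz$ pulls out the penalty contribution $\tfrac{\beta_i}{4}\dists{X_i}{Z^{(k)}}$ inside $h_i^{(k)}$.

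First I would evaluate $h_i^{(k)}$ at $X_i^{(k)}$ and $X_i^{(k+1)}$ using this decomposition and form the difference. A crucial simplification is that the multiplier formula \cref{eq:ps-mult,eq:Wk} places $\Pv{X_i^{(k)}}$ on one side of every summand of $W_i^{(k)}$; hence $\tr((X_i^{(k)})\zz \Lambda_i^{(k)} X_i^{(k)}) = 0$ by $\Pv{X_i^{(k)}} X_i^{(k)} = 0$, so the $\Lambda_i^{(k)}$ contribution survives only at iterate $k{+}1$. Rearrangement yields
\begin{equation*}
    \tfrac{\beta_i}{4}\dists{X_i^{(k+1)}}{Z^{(k)}} = \tfrac{\beta_i}{4}\ditk + \tfrac{1}{2} R_i^{(k)} - \bigl[h_i^{(k)}(X_i^{(k)}) - h_i^{(k)}(X_i^{(k+1)})\bigr],
\end{equation*}
with $R_i^{(k)} := \tr((X_i^{(k+1)})\zz (A_iA_i\zz + \Lambda_i^{(k)}) X_i^{(k+1)}) - \tr((X_i^{(k)})\zz A_iA_i\zz X_i^{(k)})$. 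Substituting the decrease bound from \cref{le:dist-bridge} and then multiplying through by $4/\beta_i$ gives $\dists{X_i^{(k+1)}}{Z^{(k)}} \leq (1 - c_1 \underline{\sigma}^2)\ditk + \tfrac{2}{\beta_i} R_i^{(k)}$.

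The remaining task is the uniform estimate $|R_i^{(k)}| = O(\sqrt{p}\,\norm{A_i}\fs)$. For either $A_iA_i\zz$-piece I would use $\tr(Y\zz A_iA_i\zz Y) = \tr(A_iA_i\zz YY\zz) \leq \norm{A_iA_i\zz}\ff \norm{YY\zz}\ff \leq \sqrt{p}\norm{A_i}\fs$, relying on $\norm{A_iA_i\zz}\ff \leq \norm{A_i}\fs$ (from $\sum \sigma_j^4 \leq (\sum \sigma_j^2)^2$) and $\norm{YY\zz}\ff = \sqrt{p}$ for $Y \in \stiefel$. For the multiplier piece the same type of argument applied to $W_i^{(k)} = -\Pv{X_i^{(k)}} A_iA_i\zz X_i^{(k)}$ yields $\norm{W_i^{(k)}}\ff \leq \norm{A_i}\fs$, then $\norm{\Lambda_i^{(k)}}\ff \leq 2\norm{A_i}\fs$ by the triangle inequality, and finally $|\tr((X_i^{(k+1)})\zz \Lambda_i^{(k)} X_i^{(k+1)})| \leq \sqrt{p}\norm{\Lambda_i^{(k)}}\ff \leq 2\sqrt{p}\norm{A_i}\fs$. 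Summing the three contributions bounds $|R_i^{(k)}|$ by a small constant multiple of $\sqrt{p}\norm{A_i}\fs$, which after the factor $2/\beta_i$ yields the claimed inequality (the stated constant $12$ absorbs this comfortably).

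The main obstacle is the bookkeeping of the identity in the second paragraph: one has to expand the difference $h_i^{(k)}(X_i^{(k)}) - h_i^{(k)}(X_i^{(k+1)})$ so that the $\beta_i Z^{(k)}(Z^{(k)})\zz$ piece converts cleanly into the $\ditk - \dists{X_i^{(k+1)}}{Z^{(k)}}$ contribution while tracking signs of all trace terms and exploiting $\Pv{X_i^{(k)}} X_i^{(k)} = 0$ so that the multiplier does not appear at iterate $k$. Once the identity is in hand, the rest is routine Frobenius-versus-spectral norm estimation, made tractable precisely because the low-rank formula \cref{eq:ps-mult} keeps $\norm{\Lambda_i^{(k)}}\ff$ bounded by $2\norm{A_i}\fs$ independently of $n$.
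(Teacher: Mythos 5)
Your proof is correct and takes essentially the same route as the paper: both start from the decrease guarantee of \cref{le:dist-bridge}, use the identity $\tr(X\zz Z^{(k)}(Z^{(k)})\zz X)=p-\tfrac12\dists{X}{Z^{(k)}}$ to convert the $\beta_i Z^{(k)}(Z^{(k)})\zz$ part of $H_i^{(k)}$ into projection distances, and bound the leftover $(A_iA_i\zz+\Lambda_i^{(k)})$-contribution by $O(\sqrt p\,\norm{A_i}\fs)$ via the low-rank multiplier formula. In fact your $R_i^{(k)}$ is exactly the paper's $\tr\bigl((A_iA_i\zz+\Lambda_i^{(k)})\dsp{X_i^{(k+1)}}{X_i^{(k)}}\bigr)$ after using $\tr\bigl((X_i^{(k)})\zz\Lambda_i^{(k)}X_i^{(k)}\bigr)=0$; the paper bounds this quantity in one Cauchy--Schwarz step by $6\sqrt p\norm{A_i}\fs$ while you bound the three traces separately, yielding the same final constant after multiplying by $2/\beta_i$.
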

	
	\begin{proof}
		According to Lemma \ref{le:dist-bridge} and 
		definitions of $h_i$ and $H_i$, we can acquire
		\begin{equation*}
			 \dfrac{1}{2} \tr \dkh{ Z Z\zz \dsp{X_i^+}{X_i}}
			+ \dfrac{1}{2\beta_i} \tr \dkh{ \dkh{A_iA_i\zz + \Lambda_i } 
			\dsp{X_i^+}{X_i}} 
			\geq \dfrac{1}{4} c_1 \underline{\sigma}^2 \dit.
		\end{equation*}
		By straightforward calculations, 
		we can further obtain the following two relationships
		\begin{align*}
			\tr \dkh{ \dkh{ A_iA_i\zz + \Lambda_i } \dsp{X_i^+}{X_i}}  
			 & \leq \norm{ A_iA_i\zz + \Lambda_i }\ff \dist{X_i^+}{X_i} 
			 \leq  6 \sqrt{p} \norm{A_i}\fs, \\
			\tr \dkh{ Z Z\zz \dsp{X_i^+}{X_i}}
			& = \dfrac{1}{2}  \dit 
			- \dfrac{1}{2}  \dists{X_i^+}{Z}.
		\end{align*}
		Combining the above three relationships, we complete the proof.
	\end{proof}
	
	\begin{lemma}
		\label{le:sub-z}
		Suppose $\{ \{X_i^{(k)}\}, Z^{(k)} \}$ is the iterate sequence generated by Algorithm \ref{alg:FAPS}. 
		Then, for $\iid$ and $k \in \N$, the following inequality holds.
		\begin{equation}
			\label{eq:ps-sub-z-con}
			{q}^{(k)} (Z^{(k)}) - {q}^{(k)} (Z^{(k+1)}) 
			\geq c_2 \norm{ \Pv{Z^{(k)}} Q^{(k)} Z^{(k)} }\fs,
		\end{equation}
		 where $c_2 > 0$ is a constant dependent on the penalty parameters $\beta_i (\iid)$.
	\end{lemma}

	\begin{proof}
		This is a direct consequence of Lemma 3.5 in \citep{Liu2013}.
	\end{proof}

	\begin{lemma}
		\label{le:dist-zk}
		Suppose $\{ \{X_i^{(k)}\}, Z^{(k)} \}$ is the iterate sequence generated by Algorithm \ref{alg:FAPS}. 
		Then the inequality
		\begin{equation*}
			\ditkp 
			\leq \rho \sumjjd 
			\dists{X_j^{(k+1)}}{Z^{(k)}}
			+ \dfrac{8\sqrt{p}}{\beta_i} \norm{A}\fs
		\end{equation*}
		holds for $\iid$ and $k \in \N$.
	\end{lemma}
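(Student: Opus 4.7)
\medskip

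\textbf{Proof plan for the lemma.} The left-hand side $\ditkp=\dists{X_i^{(k+1)}}{Z^{(k+1)}}$ involves the new global variable $Z^{(k+1)}$, which is produced (only approximately) by the sub-$Z$ step. The only handle we have on $Z^{(k+1)}$ is the sufficient decrease condition \cref{eq:ps-sub-z-con}, which in particular implies the weaker monotonicity $q^{(k)}(Z^{(k+1)})\le q^{(k)}(Z^{(k)})$. My plan is to turn this one-line inequality into the target bound by expanding $q^{(k)}$ in terms of projection distances and then isolating the $i$-th term.

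First I would substitute $Q^{(k)}=\sum_j Q_j^{(k)}=\sum_j\dkh{\beta_j X_j^{(k+1)}(X_j^{(k+1)})\zz-\Lambda_j^{(k+1)}}$ into $q^{(k)}(Z)=-\tfrac12\tr(Z\zz Q^{(k)}Z)$ and use the identity
\begin{equation*}
\norm{Z\zz X}\fs=p-\tfrac12\dists{X}{Z},\qquad X,Z\in\stiefel,
\end{equation*}
to rewrite $q^{(k)}(Z)$ as $\tfrac14\sum_j\beta_j\dists{X_j^{(k+1)}}{Z}+\tfrac12\sum_j\tr(Z\zz\Lambda_j^{(k+1)}Z)$ up to a constant independent of $Z$. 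Applying $q^{(k)}(Z^{(k+1)})\le q^{(k)}(Z^{(k)})$ and rearranging then yields
\begin{equation*}
\sumjjd\beta_j\dists{X_j^{(k+1)}}{Z^{(k+1)}}\le\sumjjd\beta_j\dists{X_j^{(k+1)}}{Z^{(k)}}+2\sumjjd\abs{\tr(Z^{(k)}\zz\Lambda_j^{(k+1)}Z^{(k)})-\tr(Z^{(k+1)}\zz\Lambda_j^{(k+1)}Z^{(k+1)})}.
\end{equation*}
Dropping all but the $i$-th term on the left (using non-negativity) and dividing by $\beta_i$, I would invoke the ratio bound $\beta_j/\beta_i\le\rho$ to get the first term in the target inequality.

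The remaining task is to control the multiplier error term by $O(\sqrt{p}\norm{A}\fs/\beta_i)$. Since $Z^{(k)},Z^{(k+1)}\in\stiefel$, each trace is bounded by $\sqrt{p}\norm{\Lambda_j^{(k+1)}}\ff$ (via $\abs{\tr(Z\zz\Lambda Z)}=\abs{\tr(\Lambda ZZ\zz)}\le\norm{\Lambda}\ff\norm{ZZ\zz}\ff=\sqrt{p}\norm{\Lambda}\ff$). For $\norm{\Lambda_j^{(k+1)}}\ff$ I would exploit the low-rank formula \cref{eq:ps-mult}--\cref{eq:Wk}: the cross terms vanish because $(X_j^{(k+1)})\zz W_j^{(k+1)}=0$ (by the projector $\Pv{X_j^{(k+1)}}$ in \cref{eq:Wk}), so $\norm{\Lambda_j^{(k+1)}}\fs=2\norm{W_j^{(k+1)}}\fs\le 2\norm{A_jA_j\zz}\fs\le 2\norm{A_j}\ff^{\,4}$, giving $\norm{\Lambda_j^{(k+1)}}\ff\le\sqrt{2}\norm{A_j}\fs$. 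Summing over $j$ and using $\sum_j\norm{A_j}\fs=\norm{A}\fs$, the multiplier term is bounded by $(2/\beta_i)\cdot\sqrt{p}\cdot 2\sqrt{2}\norm{A}\fs=\frac{4\sqrt{2p}}{\beta_i}\norm{A}\fs\le\frac{8\sqrt{p}}{\beta_i}\norm{A}\fs$, closing the estimate.

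The only nontrivial step is the exact orthogonality $(X_j^{(k+1)})\zz W_j^{(k+1)}=0$ that collapses the $\Lambda$-norm to that of a single rank-$p$ factor; without it one would lose an extra $\sqrt{p}$ and miss the stated constant. Everything else is bookkeeping: a single application of the sub-$Z$ descent, expansion in the projection-distance identity, and the uniform $\rho$-bound on the penalty parameter ratios.
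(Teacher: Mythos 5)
Your proof is correct and follows essentially the same route as the paper: start from the descent $q^{(k)}(Z^{(k+1)})\le q^{(k)}(Z^{(k)})$, expand $q^{(k)}$ via the identity $\norm{Z\zz X}\fs=p-\tfrac12\dists{X}{Z}$, isolate the $i$-th term, and invoke the $\rho$-bound on the penalty ratios. The paper organizes the multiplier estimate slightly differently, writing $\tr\dkh{\Lambda_j^{(k+1)}\dsp{Z^{(k)}}{Z^{(k+1)}}}\le\norm{\Lambda_j^{(k+1)}}\ff\,\dist{Z^{(k)}}{Z^{(k+1)}}\le 4\sqrt{p}\norm{A_j}\fs$ rather than bounding each of the two trace terms by $\sqrt{p}\norm{\Lambda_j^{(k+1)}}\ff$ separately, but the two routes are interchangeable up to the constant.

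One small inaccuracy in your closing remark: the orthogonality $(X_j^{(k+1)})\zz W_j^{(k+1)}=0$ is not needed to close the estimate, and omitting it does not cost an extra $\sqrt p$. It only sharpens $\norm{\Lambda_j^{(k+1)}}\ff$ from the crude bound $2\norm{W_j^{(k+1)}}\ff$ (via the triangle inequality on the symmetrization) to $\sqrt{2}\norm{W_j^{(k+1)}}\ff$; with the crude $2\norm{A_j}\fs$ your chain gives exactly $8\sqrt p\norm{A}\fs/\beta_i$, which is already the constant claimed in the lemma. So the orthogonality gives you a slightly better constant ($4\sqrt{2p}$ instead of $8\sqrt p$) but is not the load-bearing step.
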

	
	\begin{proof}
		The inequality \eqref{eq:ps-sub-z-con} directly results in the
		relationship ${q} (Z) - {q} (Z^+) \geq 0$,
		which yields that
		\begin{equation*}
			0 \leq \sumjjd \beta_j \tr \dkh{  X_j^+ (X_j^+)\zz 
				\dsp{Z^+}{Z}}
			+ \sumjjd \tr \dkh{  \Lambda_j^+ \dsp{Z}{Z^+}}.
		\end{equation*}
		By straightforward calculations, we can deduce the following two 
		relationships
		\begin{align*}
			\tr \dkh{  \Lambda_j^+ \dsp{Z}{Z^+} }
			& \leq \norm{\Lambda_j^+}\ff \dist{Z}{Z^+}
			\leq 4\sqrt{p} \norm{A_j}\fs,\\
			\tr \dkh{  X_j^+ (X_j^+)\zz \dsp{Z^+}{Z} }
			& = \dfrac{1}{2} \dists{X_j^+}{Z}
			-  \dfrac{1}{2} \djps,
		\end{align*}
		which implies that 
		\begin{equation*}
			\sumjjd \beta_j \djps
			\leq \sumjjd \beta_j \dists{X_j^+}{Z} + 8\sqrt{p} \norm{A}\fs.
		\end{equation*}
		Now it can be readily verified that
		\begin{equation*}
				\dips
				\leq \dfrac{1}{\beta_i} \sumjjd \beta_j \djps
				\leq \rho \sumjjd \dists{X_j^+}{Z}
				+ \dfrac{8\sqrt{p}}{\beta_i} \norm{A}\fs.
		\end{equation*}
		This completes the proof.
	\end{proof}
	
	\begin{lemma}
		\label{le:lip-phi}
		Let $\Phi_i(Y)= -\Pv{Y} A_iA_i\zz YY\zz - YY\zz A_iA_i\zz \Pv{Y} $ 
		for any $Y \in \stiefel$ and $\iid$.
		Then for any $Y_1 \in \stiefel$ and $Y_2 \in \stiefel$, it holds that
		\begin{equation*}
			\norm{\Phi_i(Y_1) - \Phi_i(Y_2)}\ff \leq 4\norm{A_i}_2^2 \dist{Y_1}{Y_2}, \quad \iid.
		\end{equation*}
	\end{lemma}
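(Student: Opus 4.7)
The plan is to reduce the claim to a short algebraic identity for projection matrices and then apply standard norm inequalities. Writing $M := A_iA_i\zz$, $P_j := Y_j Y_j\zz$ for $j=1,2$, and $\Pv{Y_j} = I_n - P_j$, I would first expand
\begin{equation*}
\Phi_i(Y_j) = -(I - P_j) M P_j - P_j M (I - P_j) = -M P_j - P_j M + 2 P_j M P_j,
\end{equation*}
which is symmetric, as expected.

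Next, setting $D := P_1 - P_2$, I would use the telescoping identity $P_1 M P_1 - P_2 M P_2 = P_1 M D + D M P_2$ to write
\begin{equation*}
\Phi_i(Y_1) - \Phi_i(Y_2) = -M D - D M + 2 P_1 M D + 2 D M P_2 = -(I - 2 P_1) M D - D M (I - 2 P_2).
\end{equation*}
Because $P_1$ and $P_2$ are orthogonal projections onto subspaces of $\R^n$, the matrices $I - 2P_1$ and $I - 2P_2$ are Householder-type involutions with spectral norm equal to $1$.

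The conclusion then follows from the standard inequality $\norm{ABC}\ff \leq \norm{A}_2 \norm{B}\ff \norm{C}_2$. Applying this twice, together with $\norm{M}_2 = \norm{A_iA_i\zz}_2 = \norm{A_i}_2^2$ and $\norm{D}\ff = \dist{Y_1}{Y_2}$, yields
\begin{equation*}
\norm{\Phi_i(Y_1) - \Phi_i(Y_2)}\ff \leq 2\norm{A_i}_2^2 \dist{Y_1}{Y_2},
\end{equation*}
which is even sharper than the stated bound with factor $4$. No real obstacle is anticipated; the only nuance is spotting the clean rewriting into $-(I-2P_1)MD - DM(I-2P_2)$, which lets one exploit $\norm{I - 2P_j}_2 = 1$ instead of the cruder bound $\norm{I - P_j}_2 + \norm{P_j}_2 = 2$.
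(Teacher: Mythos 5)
Your proof is correct; the algebra checks out line by line. The expansion $\Phi_i(Y_j) = -MP_j - P_jM + 2P_jMP_j$, the telescoping identity $P_1MP_1 - P_2MP_2 = P_1MD + DMP_2$, and the regrouping into $-(I-2P_1)MD - DM(I-2P_2)$ are all valid, and since $I - 2P_j$ is an orthogonal involution with $\norm{I-2P_j}_2 = 1$, you do obtain the sharper constant $2\norm{A_i}_2^2$. One small notational slip: to keep the Frobenius norm on the factor $D$ (and the spectral norm on $M$), you should invoke the variant $\norm{ABC}\ff \leq \norm{A}_2 \norm{B}_2 \norm{C}\ff$ for the first term and $\norm{ABC}\ff \leq \norm{A}\ff\norm{B}_2\norm{C}_2$ for the second, rather than the fixed placement $\norm{A}_2\norm{B}\ff\norm{C}_2$; but it is clear from your final line that this is what you meant. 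For comparison, the paper simply states that the lemma ``directly follows from the triangular inequality'' and omits the proof; a naive four-term split such as writing each of the two summands in $\Phi_i(Y_1) - \Phi_i(Y_2)$ as a telescope and bounding the pieces separately yields the stated factor $4$. Your grouping using the involutions $I-2P_j$ is the genuinely new ingredient here, and it halves the constant. Since \cref{le:lip-phi} feeds into the expression for $\omega_i$ in \cref{apx:global}, the sharper constant would slightly loosen the requirement on $\beta_i$ if one cared to propagate it.
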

	
	\begin{proof}
		This lemma directly follows from the triangular inequality.
		Hence, its proof is omitted.
	\end{proof}
	
	\begin{lemma}
		\label{le:svd-xz}
		Suppose Assumption \ref{asp:beta-svd} holds, 
		and $\{ \{X_i^{(k)}\}, Z^{(k)} \}$ is the iterate 
		sequence generated by Algorithm \ref{alg:FAPS}
		initiated from $( \{X_i^{(0)}\}, Z^{(0)} )$ satisfying \eqref{eq:initial}.
		Then for $k \in \N$, it holds that 
		\begin{equation}
			\label{eq:svd-xz}
			\ditk \leq \dfrac{1}{\rho d}, \quad \iid.
		\end{equation}
	\end{lemma}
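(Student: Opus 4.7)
The natural approach is induction on $k$. The base case $k=0$ is immediate from \cref{eq:initial}.

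For the inductive step, assume $\ditk \leq 1/(\rho d)$ for every $i$, and aim to show $\ditkp \leq 1/(\rho d)$. The inductive hypothesis gives $\dik \leq 1/\sqrt{\rho d}$, so the premises of \cref{le:dist-bridge} and \cref{le:dist-xk+1-zk} are satisfied. Applying \cref{le:dist-xk+1-zk} to each $j$, and combining with the inductive bound together with the lower bound $\beta_j \geq (12\rho d\sqrt{p}/(c_1\underline{\sigma}^2))\norm{A}\fs \geq (12\rho d\sqrt{p}/(c_1\underline{\sigma}^2))\norm{A_j}\fs$, which is encoded in the definition of $\omega_j$ in \cref{asp:beta-svd}, yields the intermediate estimate
\begin{equation*}
\dists{X_j^{(k+1)}}{Z^{(k)}} \leq \frac{1-c_1\underline{\sigma}^2}{\rho d} + \frac{c_1\underline{\sigma}^2}{\rho d} = \frac{1}{\rho d}, \quad j = 1, \dotsc, d.
\end{equation*}

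To finish, I would propagate this intermediate bound through the $Z$-update using \cref{le:dist-zk} to control $\ditkp$, and then absorb the residual error $8\sqrt{p}\norm{A}\fs/\beta_i$ with the other lower bounds built into $\omega_i$, notably $\omega_i \geq 16\rho d\sqrt{p}$ and $\omega_i \geq 4(1+\sqrt{2\rho d})/(c_1\underline{\sigma}^2 \rho d)$, which appear calibrated precisely for this balancing.

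The hardest part is this last step. A direct substitution of the intermediate estimate into the stated form of \cref{le:dist-zk}, namely $\ditkp \leq \rho \sum_j \dists{X_j^{(k+1)}}{Z^{(k)}} + 8\sqrt{p}\norm{A}\fs/\beta_i$, yields only $\ditkp \leq 1 + O(1/\omega_i)$, which is too loose by a factor of about $\rho d$. Closing the induction therefore appears to require a sharper variant of \cref{le:dist-zk}, presumably obtained by invoking the sufficient-decrease condition \cref{eq:ps-sub-z-con} on the $Z$-subproblem (which is not exploited in the stated proof of \cref{le:dist-zk}) to replace the factor $\rho$ by something of order $\rho/d$, or by an averaging argument reflecting that $Z^{(k+1)}$ acts as an approximate Karcher mean of the $X_j^{(k+1)}$'s and is thus close to each of them individually rather than only in the weighted sum.
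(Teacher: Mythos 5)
You correctly identify the structure of the argument (induction, base case from \cref{eq:initial}, the intermediate bound $\dists{X_j^{(k+1)}}{Z^{(k)}} \leq 1/(\rho d)$ via \cref{le:dist-xk+1-zk}), and you correctly diagnose that substituting this intermediate bound directly into \cref{le:dist-zk} is too loose by a factor of roughly $\rho d$. However, your proposed remedy points in the wrong direction. The paper does not sharpen \cref{le:dist-zk} or exploit \cref{eq:ps-sub-z-con} further, nor does it invoke any Karcher-mean/averaging argument. Instead, the missing ingredient is a genuine \emph{contraction} estimate for the $X$-update, obtained from the KKT-violation condition \cref{eq:ps-sub-x-con-2} on the $X$-subproblem together with the Lipschitz-type bound in \cref{le:lip-phi}. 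Concretely: from the intermediate bound one gets $\sigma_{\min}((X_i^{(k+1)})\zz Z^{(k)}) \geq \underline{\sigma}$, hence a lower bound on $\norm{\Pv{X_i^{(k+1)}} H_i^{(k)} X_i^{(k+1)}}\ff$ in terms of $\dist{X_i^{(k+1)}}{Z^{(k)}}$; comparing this with the upper bound $\delta_i\beta_i\dik$ from \cref{eq:ps-sub-x-con-2} (after subtracting the $\Lambda_i^{(k+1)}-\Lambda_i^{(k)}$ perturbation via \cref{le:lip-phi}), and then using the parameter restrictions on $\delta_i$ and $\beta_i$, yields
\begin{equation*}
\dist{X_i^{(k+1)}}{Z^{(k)}} \leq \sqrt{\tfrac{1}{2\rho d}}\,\dik.
\end{equation*}
This says the $X$-update shrinks the projection distance, which is strictly stronger than your intermediate estimate. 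Plugging the squared form of this into \cref{le:dist-zk} gives $\rho\sum_j \dists{X_j^{(k+1)}}{Z^{(k)}} \leq \tfrac{1}{2d}\sum_j (\mathbf{d}_j^{(k)})^2 \leq \tfrac{1}{2\rho d}$, and the residual term $8\sqrt{p}\norm{A}\fs/\beta_i \leq 1/(2\rho d)$ by the $\omega_i \geq 16\rho d\sqrt{p}$ requirement; these sum to $1/(\rho d)$, closing the induction. So the gap you found is real, but it is filled by the $X$-side inexactness condition, not the $Z$-side one.
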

	
	\begin{proof}
		We use mathematical induction to prove this lemma.
		The argument \eqref{eq:svd-xz} directly holds at 
		$\{\diz\}_{i = 1}^d$ resulting from \eqref{eq:initial}.
		Now, we assume the argument holds at 
		$\{\di\}_{i = 1}^d$,
		and investigate the situation at $\{\di^+\}_{i = 1}^d$.
		
		According to Assumption \ref{asp:beta-svd}, we have
		$\beta_i > {12 \rho d \sqrt{p} \norm{A_i}\fs}/{(c_1 \underline{\sigma}^2)}$.
		Without loss of generality, we assume that $c_1 \underline{\sigma}^2 < 1$.
		Combining Lemma \ref{le:dist-xk+1-zk} and \eqref{eq:svd-xz}, 
		we can derive that
		\begin{equation*}
			\dists{X_i^+}{Z}
			\leq \dfrac{1 - c_1 \underline{\sigma}^2}{\rho d} + \dfrac{c_1 \underline{\sigma}^2}{\rho d}
			= \dfrac{1}{\rho d},
		\end{equation*}
		which infers that $\sigma_{\min} \dkh{ (X_i^+)\zz Z } \geq \underline{\sigma}$.
		Similar to the proof of Lemma \ref{le:dist-bridge}, we can deduce that
		\begin{equation}
			\label{eq:svd-xk+1-zk}
			\norm{ \Pv{X_i^+} Z Z\zz X_i^+ }\fs \geq \dfrac{ \underline{\sigma}^2 }{2} 
			\dists{X_i^+}{Z}.
		\end{equation}
		Together with condition \eqref{eq:ps-sub-x-con-1} and equality \eqref{eq:rgrad-h}, we have
		\begin{equation*}
			\norm{ \Pv{X_i^+} H_i X_i^+ }\ff 
			\leq \delta_i \beta_i \norm{ \Pv{X_i}Z Z\zz X_i }\ff 
			\leq \delta_i \beta_i \di.
		\end{equation*}
		On the other hand, it follows from the triangular inequality that
		\begin{equation*}
			\norm{  \Pv{X_i^+} H_i X_i^+ }\ff
			\geq   \norm{  \Pv{X_i^+} \dkh{ A_iA_i\zz 
					+ \Lambda_i^+ + \beta_i Z Z\zz } X_i^+ }\ff 
			- \norm{  \Pv{X_i^+}\dkh{ \Lambda_i^+ - \Lambda_i } X_i^+ }\ff.
		\end{equation*}
		It follows from the inequality \eqref{eq:svd-xk+1-zk} that
		\begin{equation*}
			\norm{ \Pv{X_i^+} \dkh{ A_iA_i\zz + \Lambda_i^+ + \beta_i Z Z\zz } X_i^+ }\ff = \beta_i \norm{ \Pv{X_i^+} Z Z\zz X_i^+ }\ff
			\geq \dfrac{\sqrt{2}}{2} \underline{\sigma} \beta_i 
			\dist{X_i^+}{Z}.
		\end{equation*}
		According to Lemma \ref{le:lip-phi}, we have
		\begin{equation*}
			\norm{ \Pv{X_i^+} \dkh{ \Lambda_i^+ - \Lambda_i } X_i^+ }\ff 
			 \leq 4 \norm{A_i}_2^2 \dist{X_i^+}{X_i}
			 \leq 4 \norm{A_i}_2^2 \dkh{ \dist{X_i^+}{Z} + \di }.
		\end{equation*}
		Combing the above four inequalities, we further obtain that
		\begin{equation*}
			( \sqrt{2}\underline{\sigma} \beta_i / 2 - 4 \norm{A_i}_2^2 ) 
			\dist{X_i^+}{Z}
			\leq ( \delta_i \beta_i + 4 \norm{A_i}_2^2 ) \di.
		\end{equation*}
		According to Assumption \ref{asp:beta-svd},
		we have $0 \leq \delta_i < \underline{\sigma} / \sqrt{4\rho d}$, and
		\begin{equation*}
			\beta_i 
			> 
			\dfrac{4\sqrt{2} \dkh{1 + \sqrt{2 \rho d}}}{\underline{\sigma} - 2\sqrt{\rho d} \delta_i}\norm{A_i}_2^2
			\geq 
			\dfrac{4\sqrt{2}}{\underline{\sigma}}\norm{A_i}_2^2.
		\end{equation*}
		Thus, we arrive at 
		\begin{equation}
			\label{eq:dist-xxzz}
			\dist{X_i^+}{Z}
			\leq  \dfrac{ 2 ( \delta_i \beta_i + 4 \norm{A_i}_2^2 ) }{ \sqrt{2} \underline{\sigma} \beta_i - 8 \norm{A_i}_2^2} \di
			\leq \sqrt{ \dfrac{1}{2 \rho d} } \di, \quad \iid.
		\end{equation}
		Again, we have $\beta_i > 16\rho d \sqrt{p} \norm{A}\fs$
		according to Assumption \ref{asp:beta-svd}.
		Combing Lemma \ref{le:dist-zk} and \eqref{eq:svd-xz}, we further acquire that
		\begin{equation*}
			\dips
			\leq \rho \sumjjd \dists{X_j^+}{Z}
			+ \dfrac{8\sqrt{p}}{\beta_i} \norm{A}\fs
			\leq \dfrac{1}{2 d} \sumjjd 
			\mathbf{d}_j^2 + \dfrac{1}{2 \rho d}
			\leq \dfrac{1}{\rho d},
		\end{equation*}
		which completes the proof.
	\end{proof}
	
	\begin{corollary}
		\label{cor:des-xy}
		Suppose all the conditions in Lemma \ref{le:svd-xz} hold. 
		Then for any $k \in \N$, there holds
		\begin{equation*}
			\cL(\{X_i^{(k)}\}, Z^{(k)}, \{\Lambda_i^{(k)}\}) - 
			\cL(\{X_i^{(k+1)}\}, Z^{(k)}, \{\Lambda_i^{(k)}\})
			\geq \dfrac{1}{4} c_1 \underline{\sigma}^2
			\sumiid \beta_i \ditk.
		\end{equation*} 
	\end{corollary}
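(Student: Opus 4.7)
The plan is to reduce the claim directly to \cref{le:dist-bridge} by observing that, for fixed $Z$ and $\Lambda_i$, the augmented-Lagrangian summand $\cL_i(X_i,Z,\Lambda_i)$ agrees with the subproblem objective $h_i(X_i) = -\tfrac{1}{2}\tr(X_i\zz H_i X_i)$ up to an additive constant whenever $X_i \in \stiefel$. The additive constant is the only thing that must be checked carefully, so almost all the work is bookkeeping.

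First, I would expand each of the three pieces of $\cL_i$ defined in \cref{eq:Li}, treating $X_i \in \stiefel$ as the variable and $Z \in \stiefel$, $\Lambda_i$ as constants. The objective piece $f_i(X_i)$ contributes $-\tfrac{1}{2}\tr(X_i\zz A_iA_i\zz X_i)$ directly. The multiplier piece $-\tfrac{1}{2}\langle \Lambda_i, \dsp{X_i}{Z}\rangle$ splits as $-\tfrac{1}{2}\tr(X_i\zz \Lambda_i X_i) + \tfrac{1}{2}\tr(\Lambda_i ZZ\zz)$, where the second term is independent of $X_i$. For the penalty piece, the key observation is that $X_i\zz X_i = I_p$ forces $\tr\bigl((X_iX_i\zz)^2\bigr) = p$, and similarly for $Z$, so
\begin{equation*}
\tfrac{\beta_i}{4}\dists{X_i}{Z} = \tfrac{\beta_i}{2}\bigl(p - \tr(X_i\zz ZZ\zz X_i)\bigr) = -\tfrac{\beta_i}{2}\tr(X_i\zz ZZ\zz X_i) + \text{const}.
\end{equation*}
Summing these and invoking the definition of $H_i^{(k)}$ in \cref{eq:Hik} shows $\cL_i(X_i,Z^{(k)},\Lambda_i^{(k)}) = h_i^{(k)}(X_i) + C_i^{(k)}$, with $C_i^{(k)}$ independent of $X_i$. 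Hence
\begin{equation*}
\cL_i(X_i^{(k)},Z^{(k)},\Lambda_i^{(k)}) - \cL_i(X_i^{(k+1)},Z^{(k)},\Lambda_i^{(k)}) = h_i^{(k)}(X_i^{(k)}) - h_i^{(k)}(X_i^{(k+1)}).
\end{equation*}

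Second, I would invoke \cref{le:svd-xz} to justify applying \cref{le:dist-bridge} at step $k$: under the stated hypotheses (\cref{asp:beta-svd} and \cref{eq:initial}), \cref{le:svd-xz} guarantees $\ditk \leq 1/(\rho d) \leq 1/(\rho d)^{1/2} \cdot 1 $, so in particular $\dik \leq \sqrt{1/(\rho d)}$ for every $i$, which is exactly the hypothesis of \cref{le:dist-bridge}. Applying that lemma termwise yields $h_i^{(k)}(X_i^{(k)}) - h_i^{(k)}(X_i^{(k+1)}) \geq \tfrac{1}{4} c_1 \underline{\sigma}^2 \beta_i \ditk$ for each $\iid$.

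Third, summing these $d$ inequalities and using the decomposition $\cL = \sum_{i=1}^d \cL_i$ from \cref{eq:lag-ps} delivers the desired bound. I do not anticipate a real obstacle here: the only genuinely nontrivial step is the cancellation based on $X_i\zz X_i = I_p$ that collapses the quartic penalty term into a quadratic one plus a constant, and that is an elementary Stiefel-manifold identity. Everything else is an additive rearrangement plus an invocation of the two earlier lemmas, so the proof should be quite short.
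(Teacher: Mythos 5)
Your proposal is correct and follows the same route the paper takes: the paper's one-line proof simply cites \cref{le:dist-bridge} and \cref{le:svd-xz}, and you have correctly unpacked that — verifying the equivalence $\cL_i(\cdot,Z^{(k)},\Lambda_i^{(k)}) = h_i^{(k)}(\cdot) + \mathrm{const}$ on $\stiefel$ (which the paper implicitly asserts when it states in \cref{subsec:sub-x} that \cref{eq:ps-sub-x} is an equivalent form of the local subproblem), then applying \cref{le:svd-xz} to license \cref{le:dist-bridge} termwise and summing over $i$. One tiny blemish: your chain $\ditk \leq 1/(\rho d) \leq 1/(\rho d)^{1/2}\cdot 1$ is an unnecessary detour; taking square roots directly in $\ditk \leq 1/(\rho d)$ already gives $\dik \leq \sqrt{1/(\rho d)}$.
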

	
	\begin{proof}
		This corollary directly follows from Lemmas \ref{le:dist-bridge} and \ref{le:svd-xz}.
	\end{proof}
	
	\begin{corollary}
		\label{cor:des-lambda}
		Suppose all the conditions in Lemma \ref{le:svd-xz} hold. 
		Then for any $k \in \N$, it holds that {\small
		\begin{equation*}
		 \cL(\{X_i^{(k+1)}\}, Z^{(k)}, \{\Lambda_i^{(k)}\}) - 
		\cL(\{X_i^{(k+1)}\}, Z^{(k)}, \{\Lambda_i^{(k+1)}\}) 
		\geq - \dfrac{1 + \sqrt{2 \rho d}}{\rho d} \sumiid \norm{A_i}_2^2 \ditk.
		\end{equation*}}
	\end{corollary}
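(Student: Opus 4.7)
The plan is to observe that in the augmented Lagrangian $\cL$, only the linear multiplier term depends on $\Lambda_i$, so the difference in the statement collapses to
\begin{equation*}
\cL(\{X_i^{(k+1)}\}, Z^{(k)}, \{\Lambda_i^{(k)}\}) - \cL(\{X_i^{(k+1)}\}, Z^{(k)}, \{\Lambda_i^{(k+1)}\})
= \tfrac{1}{2}\sumiid \jkh{\Lambda_i^{(k+1)} - \Lambda_i^{(k)},\, \dsp{X_i^{(k+1)}}{Z^{(k)}}}.
\end{equation*}
I would immediately lower-bound each summand by $-\tfrac{1}{2}\norm{\Lambda_i^{(k+1)} - \Lambda_i^{(k)}}\ff \cdot \dist{X_i^{(k+1)}}{Z^{(k)}}$ using Cauchy--Schwarz together with the definition of the projection distance.

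Next I would exploit the closed-form multiplier formula: since $\Lambda_i^{(k)} = \Phi_i(X_i^{(k)})$ and $\Lambda_i^{(k+1)} = \Phi_i(X_i^{(k+1)})$ (by \cref{eq:ps-mult} with the notation of \cref{le:lip-phi}), the Lipschitz bound of \cref{le:lip-phi} gives
\begin{equation*}
\norm{\Lambda_i^{(k+1)} - \Lambda_i^{(k)}}\ff \leq 4 \norm{A_i}_2^2 \, \dist{X_i^{(k+1)}}{X_i^{(k)}}.
\end{equation*}
Combining these two steps, the difference in the statement is at least $-2\sumiid \norm{A_i}_2^2\, \dist{X_i^{(k+1)}}{X_i^{(k)}} \cdot \dist{X_i^{(k+1)}}{Z^{(k)}}$.

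The remaining task is to convert this quantity into a multiple of $\sumiid \norm{A_i}_2^2 \ditk$. Here I would invoke the triangle inequality $\dist{X_i^{(k+1)}}{X_i^{(k)}} \leq \dist{X_i^{(k+1)}}{Z^{(k)}} + \dik$ and then use the crucial one-step contraction bound $\dist{X_i^{(k+1)}}{Z^{(k)}} \leq \dik / \sqrt{2\rho d}$ already proved inside \cref{le:svd-xz} as \cref{eq:dist-xxzz} (this is the only place where \cref{asp:beta-svd} enters). Multiplying out yields
\begin{equation*}
\dist{X_i^{(k+1)}}{X_i^{(k)}} \cdot \dist{X_i^{(k+1)}}{Z^{(k)}}
\leq \left(\tfrac{1}{2\rho d} + \tfrac{1}{\sqrt{2\rho d}}\right)\ditk
= \tfrac{1 + \sqrt{2\rho d}}{2\rho d}\ditk,
\end{equation*}
and substituting this into the previous bound delivers the claimed inequality after the factor of $2$ cancels the $1/2$.

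The main obstacle I anticipate is purely bookkeeping: making sure the Lipschitz inequality is applied with the correct constant and, more importantly, checking that the algebra of the triangle-inequality/contraction step produces the exact constant $(1+\sqrt{2\rho d})/(\rho d)$ appearing in the statement, since a looser triangle bound would give a weaker constant. No new estimates are required---everything is assembled from \cref{le:lip-phi}, the multiplier formula \cref{eq:ps-mult}, and the auxiliary bound \cref{eq:dist-xxzz} already established in the proof of \cref{le:svd-xz}.
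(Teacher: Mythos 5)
Your proposal is correct and follows essentially the same route as the paper: Cauchy--Schwarz on the linear multiplier term, the Lipschitz estimate of \cref{le:lip-phi} combined with the fact that $\Lambda_i^{(k)}=\Phi_i(X_i^{(k)})$ from \cref{eq:ps-mult}, and then the triangle inequality plus the one-step contraction \cref{eq:dist-xxzz} to reduce everything to $\ditk$. The only difference from the paper's exposition is a trivial reordering of when the contraction bound is substituted, which does not change the argument or the resulting constant.
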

	
	\begin{proof}
		According to the Cauchy-Schwarz inequality, we can deduce that
		\begin{align*}
			& \abs{ \jkh{ \Lambda_i^+ - \Lambda_i, \dsp{X_i^+}{Z}  }} 
			= \abs{ \jkh{ \Phi_i(X_i^+) - \Phi_i(X_i),\dsp{X_i^+}{Z}  } } \\
			& \leq \norm{ \Phi_i(X_i^+) - \Phi_i(X_i) }\ff 
			\dist{X_i^+}{Z}
			\leq \sqrt{ \dfrac{8}{\rho d} } \norm{A_i}_2^2 \dist{X_i^+}{X_i} \di,
		\end{align*}
		where the last inequality follows from Lemma \ref{le:lip-phi} and \eqref{eq:dist-xxzz}.
		Moreover, we have
		\begin{equation*}
			\dist{X_i^+}{X_i}
			 \leq \dist{X_i^+}{Z} + \di 
			 \leq \dfrac{1 + \sqrt{2\rho d}}{\sqrt{2 \rho d}} \di,
		\end{equation*}
		which  further yields that
		\begin{equation*}
			\jkh{ \Lambda_i^+ - \Lambda_i, \dsp{X_i^+}{Z} } 
			\geq - \dfrac{2 (1 + \sqrt{2 \rho d})}{\rho d} \norm{A_i}_2^2 \dit.
		\end{equation*}
		Combing the fact that
		\begin{equation*}
			\cL_i (X_i^+, Z, \Lambda_i) - \cL_i (X_i^+, Z, \Lambda_i^+)
			= \dfrac{1}{2} \jkh{ \Lambda_i^+ - \Lambda_i, \dsp{X_i^+}{Z} },
		\end{equation*}
		we complete the proof.
	\end{proof}
	
	\begin{corollary}
		\label{cor:des-z}
		Suppose $\{ \{X_i^{(k)}\}, Z^{(k)} \}$ is the iterate sequence generated by Algorithm \ref{alg:FAPS}.
		Let
		$
			\bar{Q}^{(k)} = \sum_{i = 1}^d \dkh{\beta_i X_i^{(k+1)}(X_i^{(k+1)})\zz 
			+ \Phi_i (Z^{(k)}) - \Phi_i (X_i^{(k+1)}) },
		$
		and
		$	
			G^{(k)} = \Pv{Z^{(k)}} AA\zz Z^{(k)} + \Pv{Z^{(k)}} \bar{Q}^{(k)} Z^{(k)}.
		$
		Then for any $k \in \N$, it holds that
		\begin{equation*}
			\cL(\{X_i^{(k+1)}\}, Z^{(k)}, \{\Lambda_i^{(k+1)}\}) 
			- \cL(\{X_i^{(k+1)}\}, Z^{(k+1)}, \{\Lambda_i^{(k+1)}\}) \\
			\geq {} c_2 \norm{ G^{(k)} }\fs.
		\end{equation*}
	\end{corollary}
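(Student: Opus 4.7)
The plan is to reduce the inequality to the sufficient decrease guarantee \cref{eq:ps-sub-z-con} on the $Z$-subproblem. I would proceed in three moves: first, show that with $\{X_i^{(k+1)}\}$ and $\{\Lambda_i^{(k+1)}\}$ held fixed, the augmented Lagrangian \cref{eq:lag-ps} viewed as a function of $Z \in \stiefel$ coincides with $q^{(k)}(Z)$ from \cref{eq:ps-sub-z} up to an additive constant; second, apply \cref{eq:ps-sub-z-con} directly; third, match $\Pv{Z^{(k)}} Q^{(k)} Z^{(k)}$ with the expression $G^{(k)}$ defined in the statement.

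For the first move, direct expansion of $\cL_i$ from \cref{eq:Li} works: since $X_i^{(k+1)}, Z \in \stiefel$ both have $\norm{X_i X_i\zz}\fs = \norm{Z Z\zz}\fs = p$, we get $\dists{X_i^{(k+1)}}{Z} = 2p - 2\tr(Z\zz X_i^{(k+1)}(X_i^{(k+1)})\zz Z)$, while the multiplier term contributes $\frac{1}{2}\tr(Z\zz \Lambda_i^{(k+1)} Z)$ plus a $Z$-independent piece. Summing over $i$ collapses $\cL(\{X_i^{(k+1)}\}, Z, \{\Lambda_i^{(k+1)}\})$ to $-\frac{1}{2}\tr(Z\zz Q^{(k)} Z) + \mathrm{const} = q^{(k)}(Z) + \mathrm{const}$, with $Q^{(k)}$ as in \cref{eq:Qik}. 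Thus the left-hand side of the claim equals $q^{(k)}(Z^{(k)}) - q^{(k)}(Z^{(k+1)})$, and \cref{eq:ps-sub-z-con} lower-bounds this by $c_2 \norm{\Pv{Z^{(k)}} Q^{(k)} Z^{(k)}}\fs$.

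The third move is the substantive one. The multiplier formula \cref{eq:ps-mult} gives $\Lambda_i^{(k+1)} = \Phi_i(X_i^{(k+1)})$ with $\Phi_i$ as in \cref{le:lip-phi}, so comparing definitions yields $\bar{Q}^{(k)} = Q^{(k)} + \sumiid \Phi_i(Z^{(k)})$. The key identity to spot is $\Phi_i(Z) Z = -\Pv{Z} A_i A_i\zz Z$ for every $Z \in \stiefel$, which follows immediately from $\Pv{Z} Z = 0$ and $ZZ\zz Z = Z$. Using idempotency of $\Pv{Z^{(k)}}$ together with $\sumiid A_iA_i\zz = AA\zz$, summing this identity over $i$ gives $\sumiid \Pv{Z^{(k)}} \Phi_i(Z^{(k)}) Z^{(k)} = -\Pv{Z^{(k)}} AA\zz Z^{(k)}$. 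Substituting into the decomposition of $\bar{Q}^{(k)}$ produces $\Pv{Z^{(k)}} Q^{(k)} Z^{(k)} = \Pv{Z^{(k)}} \bar{Q}^{(k)} Z^{(k)} + \Pv{Z^{(k)}} AA\zz Z^{(k)} = G^{(k)}$, which closes the argument.

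I do not anticipate any genuine obstacle; the argument is bookkeeping. The only non-routine observation is that $\bar{Q}^{(k)}$ and the additional $\Pv{Z^{(k)}} AA\zz Z^{(k)}$ summand in $G^{(k)}$ were crafted precisely so that the $\sumiid \Phi_i(Z^{(k)})$ correction cancels through the identity $\Phi_i(Z) Z = -\Pv{Z} A_iA_i\zz Z$. Once that identity is recognized, both the reduction of $\cL$ to $q^{(k)}$ (via $Z \in \stiefel$) and the final matching $\Pv{Z^{(k)}} Q^{(k)} Z^{(k)} = G^{(k)}$ are straightforward.
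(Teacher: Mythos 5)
Your proof is correct and follows essentially the same route as the paper's: both apply the sufficient-decrease condition \cref{eq:ps-sub-z-con} and then identify $\Pv{Z^{(k)}}Q^{(k)}Z^{(k)}$ with $G^{(k)}$ via the decomposition $Q^{(k)} = \bar{Q}^{(k)} - \sumiid\Phi_i(Z^{(k)})$ and the identity $\Phi_i(Z)Z = -\Pv{Z}A_iA_i\zz Z$. You spell out two steps the paper treats as already known (the equivalence of the $\cL$-difference with the $q^{(k)}$-difference, established in \cref{subsec:sub-z}, and the equality $\Lambda_i^{(k+1)} = \Phi_i(X_i^{(k+1)})$), but the argument is the same.
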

	
	\begin{proof}
		Recalling the definitions of $Q$ and $\Phi_i (Z)$, we obtain that
		\begin{equation*}
			\Pv{Z} Q Z 
			=  \Pv{Z} \bar{Q} Z  - \sumiid \Pv{Z} \Phi_i (Z) Z 
			= \Pv{Z} \bar{Q} Z 
			+ \Pv{Z} AA\zz Z
			= G.
		\end{equation*}
		This together with \eqref{eq:ps-sub-z-con} completes the proof.
	\end{proof}
	
	Next we show the monotonicity of the sequence of 
	augmented Lagrangian function values $\{ \cL^{(k)}\}$ 
	where $\cL^{(k)} = \cL ( \{X_i^{(k)}\}, Z^{(k)}, \{\Lambda_i^{(k)}\} )$.
	
	\begin{proposition}
		\label{prop:lag-des}
		Suppose $\{ \{X_i^{(k)}\}, Z^{(k)} \}$ is the iterate sequence generated 
		by Algorithm \ref{alg:FAPS} 
		initiated from $( \{X_i^{(0)}\}, Z^{(0)} )$ satisfying \eqref{eq:initial},
		and problem parameters satisfy Assumption \ref{asp:beta-svd}. 
		Then the sequence $\{ \cL^{(k)} \}$ is monotonically non-increasing 
		and, for any $k \in \N$, satisfies  the following two conditions:
		\begin{equation}
			\label{eq:lag-des}
			\cL^{(k)} - \cL^{(k+1)}
			\geq \sumiid 
			\dkh{ \dfrac{1}{4} c_1 \underline{\sigma}^2 \beta_i - \dfrac{1 + \sqrt{2 \rho d}}{\rho d} 
			\norm{A_i}_2^2 } \ditk
			+ c_2 \norm{ G^{(k)} }\fs,
		\end{equation}
		and
		\begin{equation}
			\label{eq:lag-des-z}
			\cL^{(k)} - \cL^{(k+1)}
			\geq c_3 \norm{ \Pv{Z^{(k)}} AA\zz Z^{(k)} }\fs,
		\end{equation}
		where $c_3 > 0$ is a constant.
	\end{proposition}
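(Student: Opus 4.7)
The plan is to split the proof into two parts, matching the two displayed inequalities. For the first inequality \cref{eq:lag-des}, I would simply sum the three corollaries \cref{cor:des-xy,cor:des-lambda,cor:des-z}, whose left-hand sides telescope exactly to $\cL^{(k)} - \cL^{(k+1)}$ since they track the augmented Lagrangian through the three successive updates of $\{X_i\}$, $\{\Lambda_i\}$, and $Z$. The right-hand sides combine directly into the expression in \cref{eq:lag-des}. To get monotonicity, one checks that each coefficient $\alpha_i := \tfrac{1}{4}c_1 \underline{\sigma}^2 \beta_i - \tfrac{1+\sqrt{2\rho d}}{\rho d} \norm{A_i}_2^2$ is non-negative (in fact strictly positive), which is ensured by \cref{asp:beta-svd}(ii) because the expression for $\omega_i$ in the appendix includes the factor $\tfrac{4(1+\sqrt{2\rho d})}{c_1 \underline{\sigma}^2 \rho d}$, together with the trivial bound $\norm{A}\fs \geq \norm{A_i}_2^2$. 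The hypotheses of the three corollaries along the iterates are supplied by \cref{le:svd-xz} and the initialization \cref{eq:initial}.

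For \cref{eq:lag-des-z}, the starting point is the identity (immediate from the definition of $G^{(k)}$ in \cref{cor:des-z})
\begin{equation*}
\Pv{Z^{(k)}} AA\zz Z^{(k)} = G^{(k)} - \Pv{Z^{(k)}} \bar{Q}^{(k)} Z^{(k)},
\end{equation*}
together with the elementary inequality $\norm{a-b}\fs \leq 2\norm{a}\fs + 2\norm{b}\fs$. The key step is to bound $\norm{\Pv{Z^{(k)}} \bar{Q}^{(k)} Z^{(k)}}\fs$ by the already-controlled quantities $\ditk$. Splitting $\bar{Q}^{(k)}$ by summand, the $\beta_i X_i^{(k+1)} (X_i^{(k+1)})\zz$ piece simplifies as $\Pv{Z^{(k)}} X_i^{(k+1)} (X_i^{(k+1)})\zz Z^{(k)} = \Pv{Z^{(k)}} \dsp{X_i^{(k+1)}}{Z^{(k)}} Z^{(k)}$ (using $\Pv{Z^{(k)}} Z^{(k)} = 0$) and thus contributes at most $\beta_i \dist{X_i^{(k+1)}}{Z^{(k)}}$ in Frobenius norm, while the $\Phi_i(Z^{(k)}) - \Phi_i(X_i^{(k+1)})$ piece contributes at most $4\norm{A_i}_2^2 \dist{X_i^{(k+1)}}{Z^{(k)}}$ by \cref{le:lip-phi}. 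Inserting the bound $\dist{X_i^{(k+1)}}{Z^{(k)}} \leq \sqrt{1/(2\rho d)} \, \di$ from \cref{eq:dist-xxzz} and applying Cauchy-Schwarz across the sum over $i$ then yields
\begin{equation*}
\norm{\Pv{Z^{(k)}} \bar{Q}^{(k)} Z^{(k)}}\fs \leq \dfrac{1}{2\rho} \sumiid (\beta_i + 4\norm{A_i}_2^2)^2 \ditk.
\end{equation*}

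Combining the two displays, the upper bound on $\norm{\Pv{Z^{(k)}} AA\zz Z^{(k)}}\fs$ has a $\norm{G^{(k)}}\fs$-term that is absorbed into the $c_2 \norm{G^{(k)}}\fs$ already on the right of \cref{eq:lag-des}, and a weighted sum of $\ditk$ that is absorbed into $\sum_i \alpha_i \ditk$. Concretely, one obtains \cref{eq:lag-des-z} with
\begin{equation*}
c_3 = \min \hkh{ \dfrac{c_2}{2}, \;\; \min_{i} \dfrac{\rho \, \alpha_i}{(\beta_i + 4\norm{A_i}_2^2)^2} }.
\end{equation*}
The main obstacle is bookkeeping rather than ideas: one must verify that $c_3 > 0$ under \cref{asp:beta-svd} (which follows from the strict positivity of $\alpha_i$ given by the last entry of $\omega_i$), and that the splitting of the upper bound respects the structure of \cref{eq:lag-des} term by term. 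All analytical inputs — the three descent corollaries, the inductive control \cref{le:svd-xz}, the local contraction \cref{eq:dist-xxzz}, and the Lipschitz bound in \cref{le:lip-phi} — are already established, so the proof amounts to carefully assembling them.
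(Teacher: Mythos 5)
Your proposal is correct and follows essentially the same route as the paper: telescope the three descent corollaries \cref{cor:des-xy,cor:des-lambda,cor:des-z} to get \cref{eq:lag-des} and monotonicity, then bound $\Pv{Z^{(k)}}\bar{Q}^{(k)}Z^{(k)}$ via \cref{le:lip-phi} and \cref{eq:dist-xxzz} to transfer the $G^{(k)}$ estimate to $\Pv{Z^{(k)}}AA\zz Z^{(k)}$. The only cosmetic difference is in the final assembly: you square first via $\norm{a-b}\fs\le 2\norm{a}\fs+2\norm{b}\fs$ and Cauchy--Schwarz over the $i$-sum, whereas the paper works with un-squared Frobenius norms (triangle inequality, then $\norm{G}\ff\le\sqrt{(\cL-\cL^+)/c_2}$ and $\mathbf{d}_i\le\sqrt{(\cL-\cL^+)/c_4}$, squaring only at the end), so the two derivations yield slightly different but equally valid explicit constants $c_3>0$.
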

	
	\begin{proof}
		Combining Corollaries \ref{cor:des-xy}, \ref{cor:des-lambda}, and \ref{cor:des-z}, 
		we can easily verify the inequality \eqref{eq:lag-des}.
		Recalling the condition 
		$\beta_i > 4 (1 + \sqrt{2\rho d}) \norm{A_i}_2^2 / (c_1 \underline{\sigma}^2 \rho d) $ 
		in Assumption \ref{asp:beta-svd}, 
		we can conclude that $\cL - \cL^+ \geq 0$.
		Hence, the sequence $\{ \cL^{(k)} \}$ is monotonically non-increasing.
		It directly follows from the definition of $\bar{Q}$ that
		\begin{equation*}
			\Pv{Z} \bar{Q} Z  
			=  \Pv{Z} \sumiid \dkh{ \beta_i  
				\dsp{X_i^+}{Z}
				+  \Phi_i (Z) - \Phi_i (X_i^+) } Z.
		\end{equation*}
		Together with the triangular inequality and \eqref{eq:dist-xxzz}, we can obtain that
		\begin{equation*}
			\norm{ \Pv{Z} \bar{Q} Z }\ff 
			\leq \sumiid ( \beta_i 
			\dist{X_i^+}{Z}
			+ 
			\norm{ \Phi_i (X_i^+) - \Phi_i (Z) }\ff )
			\leq \sqrt{\dfrac{1}{2 \rho d}} \sumiid ( \beta_i + 4\norm{A_i}_2^2 )\di. 
		\end{equation*}
		And we define a constant 
		$c_4 := \min_{\iid} \hkh{ c_1 \underline{\sigma}^2 \beta_i / 4 - (1 + \sqrt{2 \rho d}) \norm{A_i}_2^2 / ( \rho d ) } > 0$.
		It can be readily verified that
		\begin{equation*}
				\norm{ \Pv{Z} AA\zz Z }\ff 
				= \norm{ G - \Pv{Z} \bar{Q} Z }\ff
				\leq \norm{ G }\ff  + \norm{ \Pv{Z} \bar{Q}Z }\ff 
				\leq \sqrt{ {(\cL - \cL^+ )}/{c_3} },
		\end{equation*}
		where $c_3 := \dkh{ \sum_{i = 1}^d ( \beta_i + 4\norm{A_i}_2^2 ) / \sqrt{2 \rho d c_4} + \sqrt{1 / c_2} }^{-2} > 0$ is a constant,
		and the last inequality follows from the facts that, for $\iid$,
		\begin{equation}
			\label{eq:con-xxzz}
			\norm{ G }\ff
			\leq \sqrt{ {(\cL - \cL^+) }/{c_2} }
			\quad \mbox{and} \quad
			\di
			\leq \sqrt{ {(\cL - \cL^+)}/{c_4} }.
		\end{equation}
		We complete the proof.
	\end{proof}
	
	We are now ready to present the proof of Theorem \ref{thm:global}.
	
	\begin{proof}[Proof of Theorem \ref{thm:global}]
		Since each of $X_i^{(k)}$ or $Z^{(k)}$ is orthonormal for any 
		$\iid$ and $k \in \N$, the whole sequence $\{ \{X_i^{(k)}\}, Z^{(k)} \}$ is naturally bounded.
		Then, it follows from the Bolzano-Weierstrass theorem that 
		this sequence exists an accumulation point $\dkh{ \{X_i^{\ast}\}, Z^{\ast} }$,
		where $X_i^{\ast} \in \stiefel$ and $Z^{\ast} \in \stiefel$.
		In addition, the boundedness of $\{ \Lambda_i^{(k)} \}$ 
		results from the multipliers updating formula  \eqref{eq:ps-mult}. 
		Hence, the lower boundedness of $\{ \cL^{(k)} \}$
		is owing to the continuity of the augmented Lagrangian function.
		Namely, there exists a constant $\underline{L}$ such that
		$\cL^{(k)} \geq \underline{L}$
		holds for all $k \in \N$.
		
		Let 
		$R^{(k)}=\Pv{Z^{(k)}} AA\zz Z^{(k)}$.
		It follows from \eqref{eq:lag-des-z} and \eqref{eq:con-xxzz} that there hold
		\begin{equation}
			\label{eq:sublinear-opt}
			\begin{aligned}
				\sum\limits_{k=0}^{N-1} \norm{ R^{(k)} }\fs 
				\leq {} & \dfrac{1}{c_3} \sum\limits_{k=0}^{N-1} 
				\dkh{ \cL^{(k)} - \cL^{(k+1)} } 
				\leq  \dfrac{1}{c_3} \dkh{ \cL^{(0)} - \underline{L} }
			\end{aligned}
		\end{equation}
		and
		\begin{equation}
			\label{eq:sublinear-fea}
			\sum\limits_{k = 0}^{N-1} \sumiid \ditk
			\leq  \dfrac{d}{c_4} \sum\limits_{k=0}^{N-1} 
			\dkh{ \cL^{(k)} - \cL^{(k+1)} }
			\leq  \dfrac{d}{c_4} \dkh{ \cL^{(0)} - \underline{L} }.
		\end{equation}
		Taking the limit as $N \to \infty$ on the both sides of
		\eqref{eq:sublinear-opt} and \eqref{eq:sublinear-fea},
		we obtain that
		\begin{equation*}
			\sum\limits_{k = 0}^{\infty} \norm{ R^{(k)} }\fs < \infty 
			\quad  \mbox{and} \quad 
			\sum\limits_{k = 0}^{\infty} \sumiid 
			\ditk
			< \infty,			
		\end{equation*}
		which further imply
		\begin{equation*}
			\lim\limits_{k\to\infty} \norm{ R^{(k)} }\ff = 0
			\quad  \mbox{and} \quad 
			\lim\limits_{k\to\infty} \sumiid 
			\dik
			 = 0.
		\end{equation*}
		Hence, it holds at any limit point that
			$\Pv{Z^{\ast}}
			AA\zz Z^{\ast} = 0$
			and 
			$X_i^{\ast} ( X_i^{\ast} )\zz = Z^{\ast} ( Z^{\ast} )\zz$, 
			for $\iid$.
		Therefore, $Z^{\ast}$ is a first-order stationary point of the problem \eqref{eq:opt-trace}.
		Finally, it follows from the inequalities \eqref{eq:sublinear-opt} and \eqref{eq:sublinear-fea} that
		\begin{equation*}
			\min\limits_{k = 0, \dotsc, N-1} 
			\hkh{
				\norm{ R^{(k)} }\fs +
				\dfrac{1}{d} \sumiid 
				\ditk
			} 
			\leq \dfrac{1}{N} \sum\limits_{k = 0}^{N - 1}
			\hkh{
				\norm{R^{(k)}}\fs +
				\dfrac{1}{d} \sumiid 
				\ditk
			}
			\leq \frac{C}{N},
		\end{equation*}
		where $C = (\cL^{(0)} - \underline{L})(1/c_3 +1/c_4) > 0$ is a constant.
		The proof is completed.
	\end{proof}

\bibliography{library}

\end{document}